\documentclass[11pt]{article}

\usepackage[a4paper,margin=1in]{geometry}
\usepackage{amsmath,amssymb,amsthm,mathtools}
\usepackage{enumitem}
\usepackage{microtype}
\usepackage[colorlinks=true,linkcolor=blue,citecolor=blue,urlcolor=blue]{hyperref}
\usepackage{authblk}
\numberwithin{equation}{section}

\newtheorem{theorem}{Theorem}[section]
\newtheorem{lemma}[theorem]{Lemma}
\newtheorem{proposition}[theorem]{Proposition}
\newtheorem{corollary}[theorem]{Corollary}
\newtheorem{remark}[theorem]{Remark}

\newcommand{\R}{\mathbb{R}}
\newcommand{\eps}{\varepsilon}

\newcommand{\supp}{\operatorname{supp}}

\title{New sharp lifespan estimates for a semilinear heat equation\\
with zero-mass sign-changing initial data}
\author[1,2]{Berikbol Torebek}
\affil[1]{\small Institute of
Mathematics and Mathematical Modeling, Almaty, Kazakhstan}
\affil[2]{\small SDU University, Kaskelen, Kazakhstan}
\affil[ ]{\texttt{torebek@math.kz, berikbol.torebek@sdu.edu.kz}}
\date{}

\begin{document}

\maketitle

\begin{abstract}
We consider the Cauchy problem
$$
u_t-\Delta u=|u|^p,\qquad (t,x)\in(0,T)\times\R^n,
\quad
u(0,x)=\eps u_0(x),\qquad x\in\R^n,
$$
with small sign-changing initial data having zero total mass. We assume
$$
u_0\in L^1(\R^n)\cap L^\infty(\R^n),\quad
|x|u_0\in L^1(\R^n),\quad
\int_{\R^n}u_0(x)\,dx=0,
$$
and, for the sharp subcritical upper bounds, that
$
\int_{\R^n}x\,u_0(x)\,dx\neq0.
$
Let \(T_\eps\) denote the maximal lifespan. We identify a new transition exponent
$
p_m=1+\frac1{n+1}
$
inside the Fujita range \(1<p\le p_F:=1+2/n\), and establish the sharp lifespan estimates
$$
T_\eps\asymp
\begin{cases}
\eps^{-\frac{2(p-1)}{2-(n+1)(p-1)}},
&1<p<p_m,\\[2mm]
\eps^{-\frac2{n+1}}
\bigl(\log\frac1\eps\bigr)^{-\frac2{n+2}},
&p=p_m,\\[2mm]
\eps^{-p\left(\frac{1}{p-1}-\frac{n}{2}\right)^{-1}},
&p_m<p<p_F,\\[2mm]
\exp\!\left(\eps^{-p(p-1)}\right), & p=p_F.
\end{cases}
$$
These estimates reveal a lifespan phenomenon that is different from the classical Lee--Ni law for initial data with positive mass. In the zero-mass
setting, the leading linear contribution is dipole-like, while the nonlinear source subsequently generates a positive mass of size \(O(\eps^p)\). Their
competition produces the additional threshold \(p_m\), the logarithmic correction at \(p=p_m\), and, at \(p=p_F\), a substantially longer critical
lifespan with exponent \(p_F(p_F-1)\) instead of the classical
Lee--Ni exponent \(p_F-1\).

The upper estimates are obtained by backward-Gaussian and critical
scale-ODE test-function arguments, whereas the lower estimates follow from a
unified \(L^1\)--\(L^\infty\) bootstrap preserving the zero-mass cancellation
of the linear flow and controlling the mass generated by the nonlinear
source. Thus, although \(p_F\) remains the Fujita critical exponent, zero
initial mass creates a new quantitative lifespan regime below and at the
critical exponent.
\end{abstract}

\noindent\textbf{Keywords.}
Semilinear heat equation; Fujita exponent; sign-changing solution; zero initial moment;
lifespan; first moment.

\medskip
\noindent\textbf{MSC 2020.}
35K58, 35B44, 35B33, 35C06.
\tableofcontents
\section{Introduction and main results}
\subsection{Historical background}
We study the Cauchy problem
\begin{equation}
\begin{cases}
u_t-\Delta u=|u|^p,
& (t,x)\in(0,T)\times\R^n,\\
u(0,x)=\eps u_0(x),
&x\in\R^n,
\end{cases}
\label{eq:problem}
\end{equation}
where $n\ge1$, $p>1$, $\eps>0$ is a small parameter, and the initial datum
$u_0$ is allowed to change sign.

For nonnegative initial data, the classical theory originates with Fujita's seminal 1966 work \cite{Fujita1966}. He identified the critical exponent
$$
p_F=1+\frac{2}{n},
$$
which separates the blow-up and small-data global-existence regimes for the semilinear heat equation. More precisely, every nontrivial nonnegative solution blows up in finite time when
$1<p<p_F,$
whereas for $p>p_F$ global solutions exist for sufficiently small initial data.

The critical case $p=p_F$, which was left open in Fujita's original work,
was subsequently settled by Hayakawa \cite{Hayakawa1973},  and Sugitani
\cite{Sugitani1975}. They showed
that the critical exponent belongs to the nonexistence regime. For further details on the classical Fujita theory and related
developments, see also Weissler \cite{Weissler1981}, Levine
\cite{Levine1990}, and the monograph of Quittner and Souplet
\cite{QuittnerSouplet2007}.

Lifespan estimates and the influence of the spatial decay of the initial datum were studied systematically by Lee and Ni \cite{LeeNi1992}; related lifespan questions were also considered by Pinsky \cite{Pinsky1998}. In particular, for small nonnegative initial data of the form
$
u(0,x)=\varepsilon u_0(x),
\, 0<\varepsilon\ll1,
$
with sufficiently localized \(u_0\), the maximal existence time \(T_\varepsilon\) satisfies the classical subcritical estimate
$$
T_\varepsilon
\asymp
\varepsilon^{-\left(\frac{1}{\,p-1\,}-\frac{n}{\,2\,}\right)^{-1}},
\qquad
1<p<p_F,
$$
while at the critical case
$
p=p_F=1+\frac2n,
$
one has the exponential lifespan scale
$$
T_\varepsilon
\asymp
\exp\left(\varepsilon^{-(p-1)}\right).
$$
Lee and Ni also showed that slower algebraic decay of the initial datum can modify these lifespan laws and leads to additional critical decay thresholds. It is also worth noting that the Lee--Ni lifespan estimates have been extended in various directions to more general parabolic equations and systems; (see, for instance, \cite{{FujiwaraIkedaWakasugi2020, IkedaSobajima2019, TayachiWeissler2023, Tayachi2024}}). Recently, Tobakhanov and the present author \cite{TobakhanovTorebekBLMS} extended the Lee--Ni lifespan estimates to the higher-order parabolic equation
$$
u_t+(-\Delta)^m u=|u|^p,
$$
under the assumption
$
\int_{\mathbb R^n}u_0(x)\,dx>0.
$
In the particular case \(m=1\), their result recovers the classical Lee--Ni lifespan estimate for the semilinear heat equation and, moreover, extends it to sign-changing initial data with positive total mass.

The situation is subtler for sign-changing data. Zhang \cite{Zhang2002} studied the equation with the positive source $|u|^p$ and showed that positive total mass forces non-global behavior in the Fujita range, under the assumptions appearing in that work. Pinsky \cite{Pinsky2005} then investigated the zero-mean case and proved, in particular, that for $p<p_F$ nontrivial zero-mass data do not generate global classical solutions in the natural class considered there. At the critical exponent, Pinsky obtained the corresponding non-global result under an additional
Gaussian decay assumption on the negative part of the initial datum. These results show that the cancellation
\[
\int_{\R^n}u_0(x)\,dx=0
\]
does not prevent finite-time breakdown for the equation with source $|u|^p$.

The aim of the present paper is quantitative. We ask how the cancellation
of the zeroth moment changes the lifespan when the first moment is nonzero.
Our basic assumptions are
\begin{equation}
u_0\in L^1(\R^n)\cap L^\infty(\R^n),\qquad
|x|u_0\in L^1(\R^n),
\label{eq:data-space}
\end{equation}
together with
\begin{equation}
\int_{\R^n}u_0(x)\,dx=0,
\qquad
m:=\int_{\R^n}x\,u_0(x)\,dx\ne0.
\label{eq:moments}
\end{equation}
For convenience in the classical-solution formulation, one may in addition
assume $u_0$ to be bounded and H\"older continuous. The estimates below depend
only on the mild formulation and therefore extend, by standard approximation,
to the usual bounded mild solution class.

The cancellation in \eqref{eq:moments} removes the Gaussian mass term from the
large-time linear expansion. Instead,
\begin{equation*}
e^{t\Delta}u_0(x)
=
-m\cdot\nabla G(t,x)
+o\bigl(t^{-(n+1)/2}\bigr)
\end{equation*}
on the self-similar scale $x=\sqrt t\,\xi$, where
\[
G(t,x)=(4\pi t)^{-n/2}e^{-|x|^2/(4t)}
\]
is the heat kernel. Thus the initial linear profile decays like a dipole,
with the effective decay rate $t^{-(n+1)/2}$ rather than $t^{-n/2}$.
Moment expansions of this type are classical; see, for example,
Duoandikoetxea and Zuazua \cite{DuoZuazua1992}.

A second mechanism is specific to the nonlinearity $|u|^p$. Since the source
is nonnegative, the mild formula gives
\[
u(t)\ge \eps e^{t\Delta}u_0.
\]
Moreover, whenever the solution is integrable,
\[
\frac{d}{dt}\int_{\R^n}u(t,x)\,dx
=
\int_{\R^n}|u(t,x)|^p\,dx\ge0.
\]
Thus a positive zeroth-order mass is generated immediately even though the
initial mass vanishes. On the dipole scale,
\[
\int_{\R^n}\bigl(e^{t\Delta}u_0\bigr)_+^p\,dx
\asymp
t^{-\frac{p(n+1)-n}{2}},
\]
at least from below on a fixed self-similar cone. The time integral of this
quantity changes character at
\[
\frac{p(n+1)-n}{2}=1,
\]
that is, $p_m:=1+\frac1{n+1}.$
This is the transition exponent in our estimates.

It is worth emphasizing that $p_m$ also appears in a different context for the
odd nonlinearity $|u|^{p-1}u$. Ghoul \cite{Ghoul2011,Ghoul2012} studied
zero-mean and first-moment effects for that equation. The mechanism in
\eqref{eq:problem} is different because $|u|^p$ has a fixed positive sign and
therefore creates positive mass. Recent lifespan results for the odd nonlinear
heat equation, including sign-changing and singular data, were developed
further by Tayachi and Weissler \cite{TayachiWeissler2023}. We use these works
only as context; the proof below is adapted to the positive-source structure
of \eqref{eq:problem}.

Our main result establishes sharp two-sided lifespan estimates throughout the Fujita subcritical and critical ranges. In the strict subcritical case \(1<p<p_F\), the upper estimates are obtained by means of a backward Gaussian functional, while the matching lower estimates follow from a nonlinear bootstrap argument that preserves the cancellation of the initial mass. At the Fujita critical exponent \(p=p_F\), the same cancellation-sensitive bootstrap yields the exponential lower bound, whereas the matching upper bound is derived from a sign-changing critical scale-ODE test-function argument. Consequently, the lifespan is determined sharply, up to multiplicative constants in the power-law regimes and multiplicative constants in the exponent at the critical endpoint.

\subsection{Main results and notation}

For $f\in L^1(\R^n)$, write
\[
\|f\|_{L^1_1}
:=
\int_{\R^n}(1+|x|)|f(x)|\,dx.
\]
Let $T_\eps\in(0,\infty]$ be the maximal existence time of the bounded mild
solution of \eqref{eq:problem}. Standard semilinear heat theory gives local
existence, uniqueness and the usual $L^\infty$ blow-up alternative for bounded
initial data; see, for example,
\cite{Weissler1979,Weissler1980,Pinsky2005,QuittnerSouplet2007}.

Throughout the paper we use
\[
p_F:=1+\frac2n,
\qquad
p_m:=1+\frac1{n+1}.
\]
For $1<p<p_F$, define
\begin{equation}
\mathcal L_p(\eps):=
\begin{cases}
\displaystyle
\eps^{-\frac{2(p-1)}{2-(n+1)(p-1)}},
&1<p<p_m,
\\[3mm]
\displaystyle
\eps^{-\frac2{n+1}}
\left(\log\frac1\eps\right)^{-\frac2{n+2}},
&p=p_m,
\\[3mm]
\displaystyle
\eps^{-\frac{2p(p-1)}{2-n(p-1)}},
&p_m<p<p_F.
\end{cases}
\label{eq:main-bound}
\end{equation}

\begin{theorem}[Sharp lifespan up to the Fujita exponent]
\label{thm:main}
Assume \eqref{eq:data-space}--\eqref{eq:moments} and $1<p\le p_F$.
There exist constants $\eps_0,c,C>0$, depending only on $n,p$ and $u_0$,
such that the following assertions hold for $0<\eps<\eps_0$.

\begin{enumerate}[label=\textup{(\roman*)}]
\item If $1<p<p_F$, then
\begin{equation}
c\,\mathcal L_p(\eps)
\le T_\eps\le
C\,\mathcal L_p(\eps).
\label{eq:sharp-subcritical}
\end{equation}

\item If $p=p_F=1+2/n$, then
\begin{equation}
\exp\!\left(c\eps^{-p(p-1)}\right)
\le T_\eps\le
\exp\!\left(C\eps^{-p(p-1)}\right).
\label{eq:sharp-critical}
\end{equation}
\end{enumerate}
In particular, $T_\eps<\infty$ for every sufficiently small $\eps>0$ when
$1<p\le p_F$.
\end{theorem}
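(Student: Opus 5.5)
The proof splits into lower and upper bounds, both organised around two competing mechanisms: the dipole decay of the linear flow and the positive mass created by the source. We first check heuristically that this competition produces $\mathcal L_p(\eps)$. By the moment expansion, $\|\eps e^{t\Delta}u_0\|_\infty\lesssim \eps t^{-(n+1)/2}$ for $t\ge1$. The mass generated by time $t$ is
\[
M(t)=\int_0^t\!\!\int|u|^p\,dx\,ds\approx \eps^p\int_1^t s^{-\frac{p(n+1)-n}{2}}\,ds ,
\]
which behaves like $\eps^p t^{1-\frac{p(n+1)-n}{2}}$ for $p<p_m$, like $\eps^p\log t$ for $p=p_m$, and like $\eps^p$ for $p>p_m$.

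Blow-up is reached once the effective mass $A(t)$ satisfies the Lee--Ni condition $A(t)\,t^{\frac n2-\frac1{p-1}}\gtrsim1$. Here $A$ is the larger of the mass $M(t)$ and the dipole contribution $\eps t^{-1/2}$.
\begin{itemize}
\item For $p>p_m$, taking $A\sim\eps^p$ gives $T\sim\eps^{-p(\frac1{p-1}-\frac n2)^{-1}}$.
\item For $p<p_m$, the dipole itself already satisfies $\eps t^{-1/2}\,t^{\frac n2-\frac1{p-1}}\sim1$. This gives $t\sim\eps^{-\frac{2(p-1)}{2-(n+1)(p-1)}}$. The mass $M(t)$ is of the same order at that time.
\item For $p=p_m$, we have $\frac1{p-1}-\frac n2=\frac{n+2}{2}$. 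Solving $\eps^p\log t\sim t^{(n+2)/2}$ gives the stated logarithmic correction.
\item At $p=p_F$, the Lee--Ni critical law with mass $\eps^p$ yields $\exp(\eps^{-p(p-1)})$.
\end{itemize}

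\textbf{Lower bounds.} We run an $L^1$--$L^\infty$ bootstrap on the Duhamel decomposition $u=\eps e^{t\Delta}u_0+N(t)$, with $N(t)=\int_0^t e^{(t-s)\Delta}|u|^p\,ds$. The working norm is
\[
\sup_{s<t}\Big[(1+s)^{\frac{n+1}2}\|\eps e^{s\Delta}u_0\|_\infty\Big]\le C\eps ,
\]
and the bootstrap hypothesis is
\[
\|N(s)\|_\infty\le 2C\,M(s)(1+s)^{-n/2},\qquad \|N(s)\|_1\le 2M(s).
\]
Here $M(s)$ is the heuristic mass bound from above, including the bounded contribution for $s\le1$ coming from $\|u_0\|_\infty$.

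Plugging the hypothesis in gives
\[
\int|u|^p\le \|u\|_\infty^{p-1}\|u\|_1 ,
\]
with $\|u\|_1\lesssim\eps\|u_0\|_{L^1}+M$. This is slightly wasteful for the dipole term, so we instead compute $\int|\eps e^{s\Delta}u_0|^p\lesssim\eps^p s^{-\frac{p(n+1)-n}2}$ directly. We then estimate $\|N(t)\|_\infty$ by splitting the Duhamel integral at $t/2$: the $L^1\to L^\infty$ bound is used on $[0,t/2]$ and the $L^\infty$ bound on $[t/2,t]$. The hypothesis is closed with a strict improvement as long as
\[
\big(\eps t^{-(n+1)/2}+M(t)t^{-n/2}\big)^{p-1}\,t\ll1 .
\]
This is exactly the condition $t\ll\mathcal L_p(\eps)$. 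In the critical case it is $\int^t M(s)^{p-1}s^{-1}\,ds\ll1$, which gives $\log t\ll\eps^{-p(p-1)}$. A continuity argument and the blow-up alternative then give $T_\eps\ge c\mathcal L_p(\eps)$.

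\textbf{Upper bounds.} We test against the backward Gaussian $\phi_T(t,x)=G(T-t+1,x)$, which satisfies $\partial_t\phi+\Delta\phi=0$. With $J(t)=\int u\,\phi_T(t)\,dx$, this gives $J'(t)=\int|u|^p\phi_T\ge (4\pi(T-t+1))^{-\frac n2(p-1)}J^p$ by Jensen, using $\|\phi_T\|_1=1$ and $|u|\ge u$. Blow-up before $T$ follows if $J(t_0)$ exceeds the Kaplan threshold $c\,(T-t_0)^{-\frac1{p-1}+\frac n2}$.

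To make $J(t_0)$ large we use the source. For $t_0$ of order $T/2$,
\[
J(t_0)\ge \eps\int e^{t_0\Delta}u_0\,\phi_T(t_0)+\int_0^{t_0}\!\!\int (\eps e^{s\Delta}u_0)_+^p\,\phi_T(s)\,dx\,ds .
\]
The first term vanishes to leading order: $u_0$ has zero mass and the test function is even in $x$. The second term is bounded below, via $u\ge\eps e^{s\Delta}u_0$ on the positive cone of $-m\cdot\nabla G$, by $\gtrsim T^{-n/2}\eps^p\int_1^{T}s^{-\frac{p(n+1)-n}2}\,ds$. Comparing with the threshold yields $T_\eps\le C\mathcal L_p(\eps)$ in all three subcritical regimes. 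For $p<p_m$ one can alternatively apply the dipole directly by testing against a shifted Gaussian $G(T-t+1,x-a\sqrt T\,e)$ with $e\parallel m$.

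\textbf{Main obstacle.} The critical upper bound $\exp(C\eps^{-p(p-1)})$ is the hardest step. There the backward-Gaussian threshold $T^{0}$ degenerates, so I would use a scale-ODE argument. Set $\phi_R=R^{-n}\psi(x/R)\,\eta(t/R^2)$ and $I(R)=\int u\,\phi_R$ at dyadic scales. The aim is to derive the discrete inequality
\[
I(2R)\ge I(R)+c\,I(R)^p ,
\]
using that $p=p_F$ makes the source scale-invariant, starting from $I(R_0)\gtrsim\eps^p$ obtained from the generated mass. This gives divergence after $\log R\sim\eps^{-p(p-1)}$ steps. Controlling the sign-changing part, specifically the negative lobe of the dipole, which must be shown to contribute only $O(\eps R^{-1})$ and hence be negligible relative to $\eps^p$, is where the care is needed.
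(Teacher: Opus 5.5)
Your architecture matches the paper's. The subcritical upper bounds use a backward-Gaussian functional driven by the nonlinearly generated mass. The lower bounds use the cancellation-aware $L^1$--$L^\infty$ bootstrap with the Duhamel integral split at $t/2$; your closing condition $\bigl(\eps t^{-(n+1)/2}+M(t)t^{-n/2}\bigr)^{p-1}t\ll1$ is exactly the smallness hypothesis \eqref{eq:Q-small}. The case $p=p_F$ uses a critical scale-ODE test-function argument. However, there is one concrete error in the upper bound, and the critical upper bound is only a plan.

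\textbf{The Jensen step is mis-normalized, and as written the upper-bound comparison fails.} Since $\phi_T(t)=G(T-t+1,\cdot)$ has unit mass, Jensen gives $J'\ge J^p$ with no extra factor. Your factor $(4\pi(T-t+1))^{-\frac n2(p-1)}\le1$ keeps the inequality true but discards exactly a factor $T^{n/2}$. Combine your threshold $J(t_0)\lesssim (T-t_0)^{-\frac1{p-1}+\frac n2}$ with your lower bound $J(t_0)\gtrsim \eps^pT^{-n/2}\int_{s_0}^{T/2}s^{-a}\,ds$. For $p_m<p<p_F$ this yields only $\eps^p\lesssim T^{\,n-\frac1{p-1}}$. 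That is weaker than $\mathcal L_p$, and it is no restriction at all once $p\ge 1+\frac1n$, which is a nonempty part of $(p_m,p_F)$.

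The fix is to use the threshold $J(t_0)\le\bigl((p-1)(T-t_0)\bigr)^{-1/(p-1)}$ of Lemma~\ref{lem:gaussian}. Then $\eps^pT^{-n/2}\int_{s_0}^{T/2}s^{-a}\,ds\lesssim T^{-1/(p-1)}$ gives $\mathcal L_p$ in all three subcritical regimes. You still need to check that the linear term $\eps\,(e^{(T+1)\Delta}u_0)(0)=O(\eps T^{-(n+1)/2})$ is dominated at $T=K\mathcal L_p$ with $K$ large.

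For $p<p_m$ this centered-Gaussian route differs from the paper, which tests the dipole directly with a Gaussian centered at $a\sqrt T\,e$ at $t=0$. Yours works because at $T\sim\mathcal L_p$ the accumulated mass $\eps^pT^{1-a}$ is comparable to the dipole's effective mass $\eps T^{-1/2}$. So a single test function covers all three regimes.

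Your heuristic Lee--Ni condition also has the sign of the exponent flipped. It should read $A(t)\,t^{\frac1{p-1}-\frac n2}\gtrsim1$, and at $p=p_m$ the balance is $\eps^pt^{(n+2)/2}\log t\sim1$.

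\textbf{The critical upper bound is only outlined, and the recursion should be run on the source functional, not on $\int u\phi_R$.} Take $\psi_R=[\eta((t+|x|^2)/R)]^{2p'}$, $I(R)=\iint|u|^p\psi_R$ and $y(R)=\iint|u|^p\psi_R^*$. The weak formulation and the critical H\"older cancellation give $I(R)+\eps\int u_0\psi_R(0)\le C\,y(R)^{1/p}$. Since $\psi_R^*\le\psi_R-\psi_{R/2}$, you also get $y(R)\le I(R)-I(R/2)$.

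Because $\int u_0=0$, the initial pairing satisfies $\eps\int u_0\psi_R(0)\ge-\eps\sqrt{2/R}\,\||x|u_0\|_1$. On the other hand, $I(R)\ge c\eps^p$ from the fixed positive cylinder of Lemma~\ref{lem:cylinder}. Hence for $R\ge R_0\sim\eps^{-2(p-1)}$ you obtain $I(R)\ge I(R/2)+c\,I(R)^p$.

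This recursion does not diverge by itself. The contradiction comes from the a priori bound $I(R)\le C$, which follows from the same inequality because $y\le I$. Counting dyadic steps from $c\eps^p$ up to that bound gives $\log(S/R_0)\lesssim\eps^{-p(p-1)}$, and $\log R_0=O(\log\frac1\eps)$ is harmless.

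This is a discrete version of the paper's Ikeda--Sobajima ODE for $Y(R)=\int_0^R y(r)\,dr/r$. Run from $t=0$ as above, it also removes the need for the paper's time shift to $t=1$ and the weighted Gronwall bound $B_\eps(1)\le C\eps$.
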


\begin{remark}[Role of the nonzero first moment]
The assumption $m\ne0$ is essential for the matching dipole upper bound in the
range $1<p\le p_m$. The lower bounds, the generated-mass upper bound for
$p_m<p<p_F$, and both critical endpoint estimates use only the zero-mass and
finite-first-absolute-moment assumptions together with $u_0\not\equiv0$.
Thus the endpoint assertion in Theorem \ref{thm:main} remains valid without
assuming $m\ne0$.
\end{remark}

\begin{remark}[Interpretation and comparison of the lifespan scales]
The three subcritical regimes in \eqref{eq:main-bound} reflect two competing
mechanisms. For \(1<p<p_m\), the lifespan is governed by the dipole part of the
linear solution, whereas for \(p_m<p<p_F\) it is determined by the positive
zeroth moment generated by the source, whose effective size is of order
\(\eps^p\). At \(p=p_m\), these mechanisms balance and produce the logarithmic
correction. At the Fujita exponent \(p=p_F\), the generated mass leads to the
critical exponential scale
$$
\log T_\eps\asymp \eps^{-p_F(p_F-1)}.
$$
It is useful to compare these estimates with the classical Lee--Ni lifespan
law \cite{LeeNi1992} and with the \(m=1\) case of the result of \cite{TobakhanovTorebekBLMS}. For nonnegative data, or more generally
sign-changing data with positive total mass, the effective amplitude is
\(\eps\); in the present zero-mass problem, the nonlinear evolution generates
an effective mass of order \(\eps^p\) in the mass-dominated regime.
\end{remark}
A detailed comparison of the results discussed above is provided in the following tables (see Tables \ref{tab:subcritical-comparison} and \ref{tab:critical-comparison}).

\begin{table}[ht] \centering \caption{Comparison of small-data lifespan estimates in the Fujita-subcritical range.} \label{tab:subcritical-comparison} \small\renewcommand{\arraystretch}{1.3} \begin{tabular}{p{3.0cm}|p{4.4cm}|p{2.7cm}|p{4.2cm}} \hline Result & Assumptions on $u_0$ & Range of $p$ & Lifespan \\ \hline Lee--Ni \cite{LeeNi1992} & $u_0\ge0$, $u_0\not\equiv0$ & $1<p<p_F$ & $\displaystyle T_\eps\asymp \eps^{-\frac{2(p-1)}{2-n(p-1)}} $ \\[3mm] Tobakhanov--Torebek \cite{TobakhanovTorebekBLMS}, $m=1$ & $\int_{\R^n}u_0>0$ & $1<p<p_F$ & $\displaystyle T_\eps\asymp \eps^{-\frac{2(p-1)}{2-n(p-1)}} $ \\[3mm] Present work & $\int_{\R^n}u_0=0,$ $\int_{\R^n}x\,u_0\neq0$ & $1<p<p_m$ & $\displaystyle T_\eps\asymp \eps^{-\frac{2(p-1)} {2-(n+1)(p-1)}} $ \\[3mm] Present work & same assumptions & $p=p_m$ & $\displaystyle T_\eps\asymp \eps^{-\frac{2}{n+1}} \bigl(\log(1/\eps)\bigr)^{-\frac{2}{n+2}} $ \\[3mm] Present work & same assumptions & $p_m<p<p_F$ & $\displaystyle T_\eps\asymp \eps^{-\frac{2p(p-1)} {2-n(p-1)}} $ \\ \hline \end{tabular} \end{table}

\begin{table}[ht] \centering \caption{Comparison of small-data lifespan estimates at the Fujita critical exponent.} \label{tab:critical-comparison}\small \renewcommand{\arraystretch}{1.3} \begin{tabular}{p{3.2cm}|p{4.8cm}|p{2.4cm}|p{4.0cm}} \hline Result & Assumptions on $u_0$ & Case of $p$ & Critical lifespan \\ \hline Lee--Ni \cite{LeeNi1992} & $u_0\ge0$, $u_0\not\equiv0$ & $p=p_F$ & $\displaystyle \log T_\eps \asymp \eps^{-(p_F-1)} $ \\[3mm] Tobakhanov--Torebek \cite{TobakhanovTorebekBLMS}, $m=1$ & $\int_{\R^n}u_0>0$ & $p=p_F$ & $\displaystyle \log T_\eps \asymp \eps^{-(p_F-1)} $ \\[3mm] Present work & $\int_{\R^n}u_0=0,$ $\int_{\R^n}x\,u_0\neq0$ & $p=p_F$ & $\displaystyle \log T_\eps \asymp \eps^{-p_F(p_F-1)} $ \\ \hline \end{tabular} \end{table}

\begin{remark}[Comparison with Pinsky's endpoint assumption]
Pinsky \cite{Pinsky2005} proved qualitative non-globality at $p=p_F$ for
zero-mean data under an additional Gaussian decay assumption on the negative
part. Our quantitative critical upper bound uses instead the finite absolute
first moment $|x|u_0\in L^1$. The two hypotheses play different roles: the
present moment condition supplies a quantitative lower bound for the pairing
of the time-shifted datum with large critical cutoffs.
\end{remark}

\subsection{The supercritical and negative-mass regimes}
\label{subsec:supercritical-negative-mass}

The preceding results describe the lifespan for zero-mass data in the
Fujita range \(1<p\le p_F\). For completeness, we briefly discuss what
happens outside this setting. Two cases are particularly relevant:
the supercritical range \(p>p_F\) with zero initial mass, and the case of
strictly negative initial mass.

\paragraph{The supercritical zero-mass case.}
Assume
\[
p>p_F=1+\frac2n,
\qquad
\int_{\R^n}u_0(x)\,dx=0,
\]
together with
\[
u_0\in L^1(\R^n)\cap L^\infty(\R^n),
\qquad
|x|u_0\in L^1(\R^n).
\]
Then the cancellation-sensitive bootstrap used in the proof of the lower
lifespan estimates extends globally for sufficiently small \(\eps>0\).
Indeed, if
\[
A(t):=\int_0^t\|u(s)\|_p^p\,ds,
\]
the master estimate has the form
\[
A'(t)
\le
C(1+t)^{-\frac n2(p-1)}
\left[
\eps(1+t)^{-1/2}+A(t)
\right]^p.
\]
Since
\[
\frac n2(p-1)>1
\qquad\text{when }p>p_F,
\]
the time weight is integrable. A bootstrap of the form
\[
A(t)\le K\eps^p
\]
therefore closes uniformly on \([0,\infty)\) for sufficiently small
\(\eps\). Consequently,
\[
T_\eps=\infty.
\]

Thus, in the supercritical range, the zero-mass condition does not lead to a
finite lifespan for sufficiently small initial data. Nevertheless, the
nonlinearity still produces a new asymptotic effect. Since
\[
\frac{d}{dt}\int_{\R^n}u(t,x)\,dx
=
\int_{\R^n}|u(t,x)|^p\,dx\ge0,
\]
the initially vanishing mass becomes positive for every \(t>0\), and
\[
M_\eps
:=
\lim_{t\to\infty}\int_{\R^n}u(t,x)\,dx
=
\int_0^\infty\int_{\R^n}|u(t,x)|^p\,dx\,dt.
\]
The same positivity argument as above yields
\[
M_\eps\asymp\eps^p.
\]
Hence the solution eventually develops a positive Gaussian component of
size \(O(\eps^p)\). In particular, although the linear part is initially
dipole-like because the zeroth moment vanishes, the large-time behavior is
expected to be governed by the nonlinearly generated mass:
\[
u(t,x)\sim M_\eps G(t,x),
\qquad t\to\infty.
\]
Thus the role of the zero-mass condition changes at \(p=p_F\): below and at
\(p_F\) it modifies the lifespan, whereas above \(p_F\) it modifies the
large-time asymptotic profile.

\paragraph{The case of negative initial mass.}
The situation is fundamentally different when
\[
\int_{\R^n}u_0(x)\,dx<0.
\]
In this case there is no classification based solely on the exponent \(p\)
and the sign of the initial mass. Zhang \cite{Zhang2002} showed that in the
Fujita range positive total mass leads to non-global behavior for
sign-changing solutions. Pinsky \cite{Pinsky2005} subsequently studied the
zero- and negative-mass cases. In particular, he proved that for every
$p>1$
and every prescribed number
$\ell<0,$
there exist sign-changing initial data \(u_0\) satisfying
\[
\int_{\R^n}u_0(x)\,dx=\ell
\]
for which the corresponding solution is global, and there also exist initial
data with the same total mass \(\ell\) for which the corresponding solution
is non-global.

Therefore, in contrast with the cases
\[
\int_{\R^n}u_0>0
\qquad\text{and}\qquad
\int_{\R^n}u_0=0,
\]
a strictly negative initial mass does not by itself determine whether the
solution is global. The detailed spatial profile of \(u_0\), and not merely
its total mass, becomes essential.

This gives the following qualitative picture:
\[
\begin{array}{c|c|c}
\text{Initial mass} & \text{Range of }p & \text{Typical behavior}\\
\hline
\displaystyle \int u_0>0
& 1<p\le p_F
& \text{non-global}\\[1mm]
\displaystyle \int u_0=0
& 1<p\le p_F
& \text{non-global; sharp lifespan as in Theorem~\ref{thm:main}}\\[1mm]
\displaystyle \int u_0=0
& p>p_F
& \text{small-data global existence}\\[1mm]
\displaystyle \int u_0<0
& p>1
& \text{both global and non-global behavior are possible.}
\end{array}
\]

The last line, due to Pinsky \cite{Pinsky2005}, shows that zero initial mass
is in a certain sense the borderline case in which a sharp quantitative
classification can still be obtained from the interaction between the
linear cancellation and the positive mass generated by the source.

\section{Subcritical regime: proof of the upper lifespan estimates}
\subsection{Mild solutions and nonlinear mass generation}

The mild formulation of \eqref{eq:problem} is
\begin{equation}
u(t)
=
\eps e^{t\Delta}u_0
+
\int_0^t e^{(t-s)\Delta}|u(s)|^p\,ds.
\label{eq:mild}
\end{equation}

\begin{lemma}[Positivity relative to the linear flow]
\label{lem:domination}
Let
\[
v(t):=e^{t\Delta}u_0.
\]
As long as the mild solution exists,
\begin{equation}
u(t,x)\ge \eps v(t,x)
\qquad (t>0,\ x\in\R^n).
\label{eq:domination}
\end{equation}
Consequently,
\begin{equation}
|u(t,x)|^p
\ge
\eps^p\bigl(v(t,x)_+\bigr)^p.
\label{eq:source-lower}
\end{equation}
\end{lemma}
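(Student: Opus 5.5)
The plan is to read both inequalities directly off the mild formulation \eqref{eq:mild}, using only the positivity of the heat semigroup.

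First, for $0\le s<t$ the source $|u(s)|^p$ is a nonnegative function. It is also bounded, because we work with the bounded mild solution on $[0,T_\eps)$, so $s\mapsto |u(s)|^p$ is bounded in $L^\infty$ on every compact subinterval $[0,t]$. The heat semigroup $e^{(t-s)\Delta}$ acts by convolution with the strictly positive kernel $G(t-s,\cdot)$, hence it preserves nonnegativity. It follows that
\[
\int_0^t e^{(t-s)\Delta}|u(s)|^p\,ds
=\int_0^t\int_{\R^n}G(t-s,x-y)\,|u(s,y)|^p\,dy\,ds\ \ge 0
\]
pointwise in $x$. The double integral is finite because $\int G=1$ and the integrand is bounded. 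Inserting this into \eqref{eq:mild} gives $u(t,x)\ge \eps e^{t\Delta}u_0(x)=\eps v(t,x)$, which is \eqref{eq:domination}. If the solution is only defined almost everywhere, the inequality holds a.e.; for $t>0$ both sides are continuous in $x$, so it holds everywhere.

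For \eqref{eq:source-lower}, fix $(t,x)$ and split into two cases. If $v(t,x)\le0$, then $v_+=0$ and the inequality is trivial since $|u|^p\ge0$. If $v(t,x)>0$, then \eqref{eq:domination} gives $u(t,x)\ge\eps v(t,x)=\eps v_+(t,x)>0$. Hence $|u(t,x)|=u(t,x)\ge \eps v_+(t,x)$, and since $r\mapsto r^p$ is increasing on $[0,\infty)$ we get $|u|^p\ge\eps^p v_+^p$.

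The only step that needs any care is justifying that the Duhamel integral is well defined and nonnegative pointwise. This follows from the local boundedness of $u$ on $[0,t]\subset[0,T_\eps)$ given by the standard theory cited above, so I expect no genuine obstacle; everything else is an immediate consequence of the maximum principle.
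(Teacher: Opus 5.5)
Your proposal is correct and matches the paper's proof: the Duhamel term in \eqref{eq:mild} is pointwise nonnegative because the heat kernel and $|u|^p$ are nonnegative, which gives \eqref{eq:domination}, and the case split on the sign of $v(t,x)$ gives \eqref{eq:source-lower}. Your added remarks on the finiteness of the Duhamel integral and on continuity in $x$ for $t>0$ only make explicit what the paper leaves implicit.
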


\begin{proof}
The heat semigroup preserves nonnegativity, and $|u|^p\ge0$. Therefore the
Duhamel term in \eqref{eq:mild} is pointwise nonnegative, which gives
\eqref{eq:domination}. If $v(t,x)>0$, then
$u(t,x)\ge\eps v(t,x)>0$, hence
$|u(t,x)|^p\ge\eps^p v(t,x)^p$. If $v(t,x)\le0$, the right-hand side of
\eqref{eq:source-lower} vanishes. This proves the claim.
\end{proof}
For completeness we record the mass identity, which explains the terminology
``generated mass.''

\begin{proposition}[Generation of zeroth-order mass]
\label{prop:mass}
Under \eqref{eq:data-space}, the mild solution belongs to
$L^1(\R^n)\cap L^\infty(\R^n)$ on every compact subinterval of
$[0,T_\eps)$, and
\begin{equation}
\int_{\R^n}u(t,x)\,dx
=
\eps\int_{\R^n}u_0(x)\,dx
+
\int_0^t\int_{\R^n}|u(s,x)|^p\,dx\,ds.
\label{eq:mass-identity}
\end{equation}
In particular, if \eqref{eq:moments} holds and $u_0\not\equiv0$, then
\[
\int_{\R^n}u(t,x)\,dx>0
\qquad\text{for every }t>0.
\]
\end{proposition}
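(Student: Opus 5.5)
We work directly with the mild formulation \eqref{eq:mild} on a fixed compact interval $[0,T]$ with $T<T_\eps$. There the bounded mild solution satisfies $\sup_{[0,T]}\|u(t)\|_\infty=:K_T<\infty$, by the definition of $T_\eps$ and the blow-up alternative.

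\textbf{Step 1: $L^1$ bound.} Since $e^{t\Delta}$ is a contraction on $L^1$, we have $\|\eps e^{t\Delta}u_0\|_1\le\eps\|u_0\|_1$. For the Duhamel term, $\||u(s)|^p\|_1\le K_T^{p-1}\|u(s)\|_1$. Setting $\phi(t)=\|u(t)\|_1$ gives
\[
\phi(t)\le \eps\|u_0\|_1+K_T^{p-1}\int_0^t\phi(s)\,ds .
\]
A priori $\phi$ might be infinite, so we make this rigorous in one of two ways. One option is to redo the local fixed point in the space $L^1\cap L^\infty$, where the contraction estimate closes because $\||u|^p-|w|^p\|_1\le pK^{p-1}\|u-w\|_1$. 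The other is to apply the Picard iteration in $L^1\cap L^\infty$ and use uniqueness of bounded mild solutions to identify the limit with $u$. Either way, Gronwall gives $\phi(t)\le \eps\|u_0\|_1e^{K_T^{p-1}t}$, so $u\in L^\infty([0,T];L^1\cap L^\infty)$.

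\textbf{Step 2: mass identity.} Integrate \eqref{eq:mild} over $\R^n$. The heat semigroup preserves the integral of $L^1$ functions, so $\int e^{t\Delta}f\,dx=\int f\,dx$. By Tonelli, using that $(s,x)\mapsto |u(s,x)|^p$ is nonnegative and integrable on $[0,t]\times\R^n$ by Step 1, we get
\[
\int\!\!\int_0^t e^{(t-s)\Delta}|u(s)|^p\,ds\,dx=\int_0^t\!\int|u(s,x)|^p\,dx\,ds .
\]
This yields \eqref{eq:mass-identity}.

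\textbf{Step 3: strict positivity.} Under \eqref{eq:moments} the first term vanishes, so it suffices to show that $\int_0^t\int|u|^p>0$ for every $t>0$. By Lemma~\ref{lem:domination}, $|u(s,x)|^p\ge\eps^p(v(s,x)_+)^p$. Because $u_0\not\equiv0$ and $\int u_0=0$, the function $u_0$ has a nontrivial positive part. Since $v(s)\to u_0$ in $L^1$ as $s\to0$, the set $\{v(s)>0\}$ has positive measure for all small $s$. Alternatively, for each $s>0$, $v(s)$ is a real-analytic nonzero function with zero integral, so it must take positive values somewhere. In either case $\int (v_+)^p\,dx>0$ for $s$ in a set of positive measure within $(0,t)$, and the time integral is strictly positive.

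The only delicate point is Step 1: justifying $L^1$ integrability of the bounded mild solution without circularity. We handle it through uniqueness in the bounded class and the fixed point in $L^1\cap L^\infty$.
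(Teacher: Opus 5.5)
Your proposal is correct and follows essentially the same route as the paper. The three steps match: $L^1$ control on compact subintervals via Gronwall, made rigorous by running the fixed point in $L^1\cap L^\infty$, whose step size depends only on the $L^\infty$ bound; then integration of the mild formula using mass conservation of $e^{t\Delta}$ and Tonelli; and finally positivity from the nonnegativity of the source. Your Step 3 is a slightly more explicit version of the paper's one-line positivity argument, using Lemma~\ref{lem:domination} to locate a set of positive measure where $|u|^p\ge\eps^p(v_+)^p>0$.
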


\begin{proof}
Fix $0<T<T_\eps$ and let
\[
M_T:=\sup_{0\le t\le T}\|u(t)\|_\infty<\infty.
\]
The standard fixed-point construction can be performed in
$L^1\cap L^\infty$. Alternatively, once \eqref{eq:mild} is known, one obtains
formally and then by approximation
\begin{align*}
\|u(t)\|_1
&\le
\eps\|u_0\|_1
+
\int_0^t \||u(s)|^p\|_1\,ds
\\&\le
\eps\|u_0\|_1
+
M_T^{p-1}\int_0^t\|u(s)\|_1\,ds.\end{align*}
Gronwall's inequality gives local $L^1$ control. Integrating
\eqref{eq:mild} in space and using
\[
\int_{\R^n}e^{t\Delta}f\,dx=\int_{\R^n}f\,dx
\]
gives \eqref{eq:mass-identity}. If the initial mass is zero, then the right
side is the time integral of a nonnegative function. Since the solution is
nontrivial for small positive times, this integral is strictly positive.
\end{proof}

\subsection{A backward Gaussian blow-up criterion}

The following elementary lemma is the quantitative core of the upper-bound
argument.

\begin{lemma}[Backward Gaussian criterion]
\label{lem:gaussian}
Let $0<T<T_\eps$, let $z\in\R^n$, and define
\[
\Phi(t,x)=G(T-t,x-z),
\qquad
F(t)=\int_{\R^n}u(t,x)\Phi(t,x)\,dx,
\qquad 0\le t<T.
\]
Then
\begin{equation}
F'(t)
=
\int_{\R^n}|u(t,x)|^p\Phi(t,x)\,dx
\ge |F(t)|^p.
\label{eq:Fdiff}
\end{equation}
Hence, if $0\le t_0<T$ and $F(t_0)>0$, then
\begin{equation}
F(t_0)
\le
\bigl((p-1)(T-t_0)\bigr)^{-1/(p-1)}.
\label{eq:gaussian-necessary}
\end{equation}
\end{lemma}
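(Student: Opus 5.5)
The plan is to differentiate $F$ directly, using the fact that $\Phi$ solves the backward heat equation $\partial_t\Phi+\Delta\Phi=0$ on $[0,T)\times\R^n$. Formally, from $u_t=\Delta u+|u|^p$,
\[
F'(t)=\int_{\R^n}(u_t\Phi+u\,\partial_t\Phi)\,dx=\int_{\R^n}(\Delta u\,\Phi-u\,\Delta\Phi)\,dx+\int_{\R^n}|u|^p\Phi\,dx=\int_{\R^n}|u|^p\Phi\,dx .
\]
To make this rigorous for the bounded mild solution, I will avoid integrating by parts and work from \eqref{eq:mild}. Pairing \eqref{eq:mild} with $\Phi(t)$ and using the self-adjointness of the heat semigroup together with $e^{(t-s)\Delta}\Phi(t,\cdot)=\Phi(s,\cdot)$ gives
\[
F(t)=\eps\int_{\R^n} u_0\,\Phi(0,x)\,dx+\int_0^t\int_{\R^n}|u(s,x)|^p\Phi(s,x)\,dx\,ds .
\]
All integrals converge because $u\in L^\infty$ on $[0,t]$ for $t<T<T_\eps$, $\Phi(s)\in L^1$, and $s\mapsto\Phi(s)$ is continuous in $L^1$ for $s<T$. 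The integrand in $s$ is continuous, so $F$ is $C^1$ on $[0,T)$ with the stated derivative.

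For the inequality, note that $\Phi(t,\cdot)\ge0$ and $\int_{\R^n}\Phi(t,x)\,dx=1$, so $\Phi(t,x)\,dx$ is a probability measure. Jensen's inequality for the convex function $s\mapsto|s|^p$ then gives $\int|u|^p\Phi\ge\bigl|\int u\Phi\bigr|^p=|F|^p$, which proves \eqref{eq:Fdiff}.

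For \eqref{eq:gaussian-necessary}, I use an ODE comparison. Since $F'\ge0$ and $F(t_0)>0$, we have $F(t)\ge F(t_0)>0$ on $[t_0,T)$, hence $F'\ge F^p$ there. Integrating $\frac{d}{dt}\bigl(-F^{1-p}/(p-1)\bigr)=F^{-p}F'\ge1$ from $t_0$ to $t<T$ yields
\[
F(t_0)^{1-p}\ge F(t_0)^{1-p}-F(t)^{1-p}\ge (p-1)(t-t_0).
\]
Letting $t\uparrow T$ gives $F(t_0)^{1-p}\ge(p-1)(T-t_0)$, which is \eqref{eq:gaussian-necessary}. The only delicate point is the justification of differentiability near the endpoint $t=T$, where $\Phi$ concentrates. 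This is harmless because every step above takes place on $[t_0,t]$ with $t<T$ and passes to the limit only in the final elementary inequality, which involves $F(t_0)$ alone.
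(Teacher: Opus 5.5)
Your proposal is correct and follows the paper's proof: you derive $F'=\int|u|^p\Phi\,dx$ from the backward heat equation for $\Phi$, apply Jensen's inequality to the probability density $\Phi(t,\cdot)$, and integrate $F^{-p}F'\ge1$ on $[t_0,t]$ before letting $t\uparrow T$. The only difference is how you justify the derivative identity: you use the Duhamel formula and $e^{(t-s)\Delta}\Phi(t,\cdot)=\Phi(s,\cdot)$ instead of integrating by parts on $[\delta,T-\delta]$, which is a cleaner rigorization for mild solutions and is exactly the representation the paper itself uses later in \eqref{eq:Fhalf}.
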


\begin{proof}
Since
\[
\Phi_t+\Delta\Phi=0,
\]
integration by parts gives, for $0<t<T$,
\[
\begin{aligned}
F'(t)
&=
\int_{\R^n}u_t\Phi\,dx+\int_{\R^n}u\Phi_t\,dx\\
&=
\int_{\R^n}(\Delta u+|u|^p)\Phi\,dx
-\int_{\R^n}u\Delta\Phi\,dx\\
&=
\int_{\R^n}|u|^p\Phi\,dx.
\end{aligned}
\]
The identity can be justified first on $[\delta,T-\delta]$ and then extended
to $t=0$ by continuity of the mild solution.

Since $\Phi\ge0$ and
\[
\int_{\R^n}\Phi(t,x)\,dx=1,
\]
Jensen's inequality yields
\[
\int_{\R^n}|u|^p\Phi\,dx
\ge
\left|\int_{\R^n}u\Phi\,dx\right|^p
=
|F(t)|^p.
\]
If $F(t_0)>0$, then $F$ remains positive and
\[
\frac{d}{dt}F^{1-p}
=
(1-p)F^{-p}F'
\le -(p-1).
\]
Integrating from $t_0$ to $t<T$ gives
\[
F(t)^{1-p}
\le
F(t_0)^{1-p}-(p-1)(t-t_0).
\]
The right-hand side must stay nonnegative before $T$, so letting $t\uparrow T$
gives \eqref{eq:gaussian-necessary}.
\end{proof}

\subsection{First-moment asymptotics of the linear flow}

We now make the dipole expansion precise.

\begin{lemma}[Self-similar first-moment expansion]
\label{lem:dipole}
Assume \eqref{eq:data-space}--\eqref{eq:moments}, and put
$v(t)=e^{t\Delta}u_0$. Then
\begin{equation}
t^{(n+1)/2}v(t,\sqrt t\,\xi)
\longrightarrow
-m\cdot\nabla G(1,\xi)
=
\frac{m\cdot\xi}{2}G(1,\xi)
\label{eq:selfsimilar-limit}
\end{equation}
locally uniformly for $\xi\in\R^n$ as $t\to\infty$.
\end{lemma}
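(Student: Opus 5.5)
We write $v(t,x)=\int G(t,x-y)u_0(y)\,dy$ and subtract the zeroth and first Taylor terms of $G(t,x-y)$ in $y$. The zeroth term vanishes because $\int u_0=0$. The first term gives $-\nabla G(t,x)\cdot\int y\,u_0(y)\,dy=-m\cdot\nabla G(t,x)$. Hence
\[
v(t,x)+m\cdot\nabla G(t,x)=\int_{\R^n}\bigl[G(t,x-y)-G(t,x)+y\cdot\nabla G(t,x)\bigr]u_0(y)\,dy .
\]
By the self-similar scaling $G(t,x)=t^{-n/2}G(1,x/\sqrt t)$ we have $t^{(n+1)/2}\nabla G(t,\sqrt t\,\xi)=\nabla G(1,\xi)$. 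Also $-m\cdot\nabla G(1,\xi)=\frac{m\cdot\xi}{2}G(1,\xi)$, because $\nabla G(1,\xi)=-\frac{\xi}{2}G(1,\xi)$. So it suffices to prove
\[
\sup_{|\xi|\le R}\Bigl|\int_{\R^n} t^{(n+1)/2}\bigl[G(t,\sqrt t\xi-y)-G(t,\sqrt t\xi)+y\cdot\nabla G(t,\sqrt t\xi)\bigr]u_0(y)\,dy\Bigr|\to0 .
\]
After scaling, the integrand becomes $\sqrt t\,[g(\xi-\eta)-g(\xi)+\eta\cdot\nabla g(\xi)]u_0(y)$, where $g=G(1,\cdot)$ and $\eta=y/\sqrt t$.

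We use two bounds for the bracket, both uniform in $\xi\in\R^n$, since $g$ has bounded first and second derivatives. The mean value theorem gives $|g(\xi-\eta)-g(\xi)|\le\|\nabla g\|_\infty|\eta|$, so the bracket is at most $2\|\nabla g\|_\infty|\eta|$. Taylor's formula gives a bracket of at most $\tfrac12\|D^2g\|_\infty|\eta|^2$. Combining them, the bracket is bounded by $C\min(|\eta|,|\eta|^2)$.

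Multiplying by $\sqrt t$, the integrand is dominated by $C\min\bigl(|y|,\,|y|^2/\sqrt t\bigr)|u_0(y)|$. This quantity is at most $C|y||u_0(y)|\in L^1$, and it tends to $0$ pointwise as $t\to\infty$. Dominated convergence then shows the integral tends to $0$. Because the bound is uniform in $\xi$, the convergence in \eqref{eq:selfsimilar-limit} is in fact uniform on all of $\R^n$, which is stronger than the locally uniform statement.

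The only point needing care is this last step. Only the first absolute moment $|x|u_0\in L^1$ is assumed, so we cannot use the quadratic Taylor bound alone, since it would require a second moment. The $\min(|\eta|,|\eta|^2)$ interpolation combined with dominated convergence handles this. It yields the qualitative $o(1)$ convergence but no rate, and that is exactly what the lemma claims.
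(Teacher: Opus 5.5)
Your proof is correct and follows essentially the paper's route: both rescale to the difference quotient $\sqrt t\,[G(1,\xi-y/\sqrt t)-G(1,\xi)]$ and apply dominated convergence with the first-moment majorant $C|y|\,|u_0(y)|$. The one refinement is that you subtract the linear Taylor term before passing to the limit, which gives the $\xi$-independent majorant $C\min(|y|,|y|^2/\sqrt t)\,|u_0(y)|\to0$; this makes rigorous the uniformity step the paper only sketches via uniform continuity of $\nabla G$, and it yields uniform convergence on all of $\R^n$.
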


\begin{proof}
We have
\[
v(t,\sqrt t\,\xi)
=
t^{-n/2}
\int_{\R^n}
G\left(1,\xi-\frac{y}{\sqrt t}\right)u_0(y)\,dy.
\]
Because $\int u_0=0$,
\[
t^{(n+1)/2}v(t,\sqrt t\,\xi)
=
\int_{\R^n}
\sqrt t
\left[
G\left(1,\xi-\frac{y}{\sqrt t}\right)-G(1,\xi)
\right]u_0(y)\,dy.
\]
For each fixed $(\xi,y)$,
\[
\sqrt t
\left[
G\left(1,\xi-\frac{y}{\sqrt t}\right)-G(1,\xi)
\right]
\longrightarrow
-y\cdot\nabla G(1,\xi).
\]
Moreover, by the mean-value theorem,
\[
\left|
\sqrt t
\left[
G\left(1,\xi-\frac{y}{\sqrt t}\right)-G(1,\xi)
\right]
\right|
\le
|y|\,\|\nabla G(1)\|_\infty.
\]
The majorant is integrable against $|u_0(y)|\,dy$ by
\eqref{eq:data-space}. Dominated convergence therefore gives
\[
t^{(n+1)/2}v(t,\sqrt t\,\xi)
\to
-\int_{\R^n}y\,u_0(y)\,dy\cdot\nabla G(1,\xi)
=
-m\cdot\nabla G(1,\xi).
\]
The same estimate together with uniform continuity of $\nabla G$ yields local
uniformity in $\xi$.
\end{proof}

\begin{corollary}[A positive dipole point]
\label{cor:dipole-point}
Let
\[
e=\frac{m}{|m|}.
\]
For every fixed $a>0$, there exist $c_a>0$ and $T_a>0$ such that
\begin{equation}
v(T,a\sqrt T\,e)
\ge
c_a T^{-(n+1)/2},
\qquad T\ge T_a.
\label{eq:dipole-point}
\end{equation}
\end{corollary}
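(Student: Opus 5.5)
The corollary follows from Lemma~\ref{lem:dipole}, evaluated at the single self-similar point $\xi=a e$.

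First I will compute the limiting profile there. Since $e=m/|m|$ and $m\cdot e=|m|$, formula \eqref{eq:selfsimilar-limit} gives
\[
-m\cdot\nabla G(1,ae)=\frac{m\cdot(ae)}{2}\,G(1,ae)=\frac{a|m|}{2}(4\pi)^{-n/2}e^{-a^2/4}.
\]
Call this number $2c_a$. It is strictly positive because $a>0$ and $m\neq0$.

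Next I will use the convergence. Lemma~\ref{lem:dipole} gives convergence locally uniformly in $\xi$, and in particular at the single point $\xi=ae$. So there is $T_a>0$ such that
\[
\bigl|T^{(n+1)/2}v(T,\sqrt T\,ae)-2c_a\bigr|\le c_a\qquad\text{for all } T\ge T_a.
\]
This yields $T^{(n+1)/2}v(T,a\sqrt T\,e)\ge c_a$. Multiplying by $T^{-(n+1)/2}$ gives \eqref{eq:dipole-point}.

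There is no real obstacle. The only points to check are the sign convention $-\nabla G(1,\xi)=\tfrac{\xi}{2}G(1,\xi)$ and the fact that pointwise convergence at the fixed point $ae$ is all that is needed.
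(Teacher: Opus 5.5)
Your proposal is correct and matches the paper's proof: both evaluate the limit in Lemma~\ref{lem:dipole} at $\xi=ae$, note that $\frac{a|m|}{2}G(1,ae)>0$, and conclude from convergence at that fixed point for all large $T$. Taking $c_a$ to be half the limiting value simply makes explicit the paper's phrase ``for all sufficiently large $T$.''
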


\begin{proof}
In \eqref{eq:selfsimilar-limit}, take $\xi=ae$. Then
\[
\frac{m\cdot(ae)}2G(1,ae)
=
\frac{a|m|}{2}G(1,ae)>0.
\]
The conclusion follows for all sufficiently large $T$.
\end{proof}

The next localized form is the one needed at the transition exponent.

\begin{lemma}[Positive self-similar sector]
\label{lem:sector}
There exist a compact set
\[
K\Subset\{\xi\in\R^n:m\cdot\xi>0\},
\qquad |K|>0,
\]
and constants $c>0$, $s_0>0$ such that
\begin{equation}
v(s,\sqrt s\,\xi)
\ge
c\,s^{-(n+1)/2},
\qquad
s\ge s_0,\quad \xi\in K.
\label{eq:sector-pointwise}
\end{equation}
Consequently,
\begin{equation}
\int_{\sqrt s K}\bigl(v(s,x)_+\bigr)^p\,dx
\ge
c\,s^{-a},
\qquad
a:=\frac{p(n+1)-n}{2}.
\label{eq:sector-Lp}
\end{equation}
\end{lemma}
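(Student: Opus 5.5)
The plan is to upgrade the pointwise limit in Lemma \ref{lem:dipole} to a uniform lower bound on a compact set where the limit profile is strictly positive. Then a change of variables turns \eqref{eq:sector-pointwise} into \eqref{eq:sector-Lp}.

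\emph{Choice of $K$.} Set $e=m/|m|$ and let $h(\xi):=\frac{m\cdot\xi}{2}G(1,\xi)$ be the limit profile in \eqref{eq:selfsimilar-limit}. Take
\[
K:=\{\xi\in\R^n:\ |\xi-e|\le 1/2\}.
\]
This is a closed ball, so it is compact with $|K|>0$. For $\xi\in K$ we have $m\cdot\xi=|m|\,e\cdot\xi\ge|m|(1-1/2)=|m|/2>0$, hence $K\Subset\{m\cdot\xi>0\}$. Since $h$ is continuous and strictly positive on $K$, we get $2c:=\min_K h>0$.

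\emph{Uniform bound.} Lemma \ref{lem:dipole} gives $s^{(n+1)/2}v(s,\sqrt s\,\xi)\to h(\xi)$ uniformly on the compact set $K$. So there is $s_0$ such that
\[
\sup_K\bigl|s^{(n+1)/2}v(s,\sqrt s\,\xi)-h(\xi)\bigr|\le c\qquad\text{for } s\ge s_0 .
\]
Consequently $s^{(n+1)/2}v(s,\sqrt s\,\xi)\ge 2c-c=c$ on $K$, which is \eqref{eq:sector-pointwise}. The only substantive point is that the convergence in Lemma \ref{lem:dipole} really is uniform on compacts. That lemma asserts this, and it follows from the dominated-convergence majorant $|y|\,\|\nabla G(1)\|_\infty$ together with uniform continuity of $\nabla G(1,\cdot)$. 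Here is the argument: split $u_0$ into its part with $|y|\le R$ and with $|y|>R$. The tail contributes at most $2\|\nabla G(1)\|_\infty\int_{|y|>R}|y||u_0|$ uniformly in $\xi$. On $|y|\le R$, the mean-value difference quotient converges to $-y\cdot\nabla G(1,\xi)$ uniformly in $\xi$ by uniform continuity of $\nabla G(1,\cdot)$. I regard this as the only step needing care.

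\emph{Integral bound.} Since $v>0$ on $\sqrt s K$ for $s\ge s_0$, we change variables $x=\sqrt s\,\xi$, $dx=s^{n/2}d\xi$:
\[
\int_{\sqrt sK}\bigl(v(s,x)_+\bigr)^p\,dx
= s^{n/2}\int_K v(s,\sqrt s\,\xi)^p\,d\xi
\ge s^{n/2}\,|K|\,c^p s^{-p(n+1)/2}.
\]
The right-hand side equals $|K|c^p\,s^{-\frac{p(n+1)-n}{2}}=|K|c^p\,s^{-a}$. Renaming the constant $c$ to $|K|c^p$ (for example, take the minimum of the two constants) gives \eqref{eq:sector-Lp}.
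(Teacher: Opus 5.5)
Your proposal is correct and follows the paper's own argument. You take a compact set of positive measure inside $\{m\cdot\xi>0\}$ on which the limit profile $\frac{m\cdot\xi}{2}G(1,\xi)$ has a positive minimum, invoke the locally uniform convergence from Lemma~\ref{lem:dipole}, and change variables $x=\sqrt s\,\xi$. Your explicit ball $K=\{|\xi-e|\le 1/2\}$ and the tail-splitting justification of uniform convergence only make concrete what the paper leaves implicit.
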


\begin{proof}
Choose any compact set of positive measure strictly contained in the half-space
$\{m\cdot\xi>0\}$. The limit in \eqref{eq:selfsimilar-limit} is strictly
positive on a sufficiently small such compact set. By local uniform
convergence, \eqref{eq:sector-pointwise} follows. Changing variables
$x=\sqrt s\,\xi$ then gives
\[
\begin{aligned}
\int_{\sqrt s K}(v_+)^p\,dx
&\ge
c\,s^{-p(n+1)/2}s^{n/2}|K|\\
&=
c\,s^{-[p(n+1)-n]/2}.
\end{aligned}
\]
\end{proof}

\begin{remark}[Origin of $p_m$]
From \eqref{eq:sector-Lp},
\[
\int^{T}s^{-a}\,ds
\]
is algebraically divergent if $a<1$, logarithmically divergent if $a=1$,
and convergent if $a>1$. Since
\[
a=1
\quad\Longleftrightarrow\quad
p=1+\frac1{n+1},
\]
the exponent $p_m$ is exactly the threshold for the time accumulation of the
nonlinearly generated mass coming from the dipole profile.
\end{remark}

\subsection{The dipole-dominated regime}

We first prove the upper bound when
\[
1<p<p_m.
\]

\begin{proposition}
\label{prop:dipole}
If
\[
1<p<1+\frac1{n+1},
\]
then
\begin{equation}
T_\eps
\le
C\eps^{-\frac{2(p-1)}{2-(n+1)(p-1)}}
\label{eq:dipole-bound}
\end{equation}
for all sufficiently small $\eps>0$.
\end{proposition}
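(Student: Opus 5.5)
Fix $\eps$ small and suppose $T_\eps>T$ for some $T$ to be chosen. We apply Lemma \ref{lem:gaussian} with $t_0=0$, centre $z=a\sqrt{T}\,e$ (say $a=1$, $e=m/|m|$), and the Gaussian $\Phi(t,x)=G(T-t,x-z)$ on $[0,T)$. At $t_0=0$, by the semigroup property,
\[
F(0)=\eps\int_{\R^n}u_0(x)G(T,x-z)\,dx=\eps\,v(T,z).
\]
Corollary \ref{cor:dipole-point} then gives $F(0)\ge c_a\eps T^{-(n+1)/2}>0$ whenever $T\ge T_a$. Strictly speaking the lemma is stated for $T<T_\eps$. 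We therefore apply it with $T$ replaced by any $T'<T_\eps$ and obtain a contradiction once $T'$ is large, rather than taking $T'=T_\eps$ directly.

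Plugging into \eqref{eq:gaussian-necessary} with $t_0=0$ yields
\[
c_a\eps T^{-(n+1)/2}\le \bigl((p-1)T\bigr)^{-1/(p-1)},
\qquad\text{that is}\qquad
T^{\frac1{p-1}-\frac{n+1}{2}}\le C\eps^{-1}.
\]
The exponent $\frac1{p-1}-\frac{n+1}2$ is positive exactly when $p<p_m$, which is the hypothesis. Inverting gives
\[
T\le C\eps^{-\left(\frac1{p-1}-\frac{n+1}{2}\right)^{-1}}=C\eps^{-\frac{2(p-1)}{2-(n+1)(p-1)}}.
\]

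Formally, let $L:=C_0\eps^{-2(p-1)/(2-(n+1)(p-1))}$ with $C_0$ large. If $T_\eps>L$, take $T=L$. For small $\eps$ we have $L\ge T_a$, so the inequality above fails once $C_0^{\frac1{p-1}-\frac{n+1}2}>C$, which is a contradiction. Hence $T_\eps\le L$.

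The only delicate points are, first, justifying the identity $F(0)=\eps v(T,z)$, which uses $u(0)=\eps u_0$ and the symmetry of $G$. Second, one needs $L\ge T_a$, which holds because $L\to\infty$ as $\eps\to0$. I expect no serious obstacle. The main point is the positivity of $F(0)$ supplied by the nonzero first moment through Corollary \ref{cor:dipole-point}. This positivity fails if the centre is not placed on the positive side $m\cdot z>0$ at the self-similar scale $\sqrt T$.
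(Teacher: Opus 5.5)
Your proposal is correct and follows the paper's proof essentially verbatim: you assume $T_\eps>K\eps^{-A_1}$ and take $T=K\eps^{-A_1}$ with the backward Gaussian centred at $a\sqrt T\,e$ at $t_0=0$. Corollary~\ref{cor:dipole-point} gives $F(0)=\eps v(T,z)\ge c\eps T^{-(n+1)/2}$, and \eqref{eq:gaussian-necessary} then forces $\eps T^{d_1}\le C$, which contradicts the choice of $T$ once $K$ is large. Your aside about replacing $T$ by some $T'<T_\eps$ is unnecessary, since $T=L<T_\eps$ already satisfies the hypothesis of Lemma~\ref{lem:gaussian}, as your formal version does.
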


\begin{proof}
Let
\[
A_1
:=
\frac{2(p-1)}{2-(n+1)(p-1)}.
\]
Suppose, to the contrary, that for some large constant $K>1$ and a sequence
$\eps\downarrow0$,
\[
T_\eps>K\eps^{-A_1}.
\]
Set
\[
T=K\eps^{-A_1}.
\]
For small $\eps$, $T$ is large. Choose
\[
z_T=a\sqrt T\,e
\]
as in Corollary \ref{cor:dipole-point}. For the backward Gaussian functional
of Lemma \ref{lem:gaussian},
\[
F(0)
=
\int_{\R^n}\eps u_0(x)G(T,x-z_T)\,dx
=
\eps(e^{T\Delta}u_0)(z_T).
\]
Hence
\begin{equation}
F(0)\ge c\eps T^{-(n+1)/2}.
\label{eq:F0-dipole}
\end{equation}
The necessary condition \eqref{eq:gaussian-necessary} gives
\[
c\eps T^{-(n+1)/2}
\le
C T^{-1/(p-1)}.
\]
Thus
\begin{equation}
\eps
T^{d_1}
\le C,
\qquad
d_1:=\frac1{p-1}-\frac{n+1}{2}.
\label{eq:d1-ineq}
\end{equation}
In the present range $d_1>0$, and
\[
A_1=\frac1{d_1}.
\]
Substituting $T=K\eps^{-A_1}$ into \eqref{eq:d1-ineq} yields
\[
K^{d_1}\le C.
\]
Choosing $K$ larger than the fixed constant on the right gives a contradiction.
Therefore \eqref{eq:dipole-bound} holds.
\end{proof}

\begin{remark}
The raw dipole argument actually works whenever
\[
p<1+\frac2{n+1}.
\]
However, once $p>p_m$, the generated-mass mechanism below gives a strictly
stronger upper bound. The two bounds cross exactly at $p=p_m$.
\end{remark}

\subsection{A fixed positive cylinder for the linear flow}

For the generated-mass regime we need only a very elementary positivity fact.

\begin{lemma}
\label{lem:cylinder}
There exist $0<s_0<s_1<\infty$, a bounded measurable set $E\subset\R^n$
with $|E|>0$, and a constant $c_0>0$ such that
\begin{equation}
v(s,x)\ge c_0,
\qquad
(s,x)\in[s_0,s_1]\times E.
\label{eq:cylinder}
\end{equation}
\end{lemma}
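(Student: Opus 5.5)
We obtain the cylinder from the continuity of the linear flow $v(t)=e^{t\Delta}u_0$ on $(0,\infty)\times\R^n$, combined with the fact that $v$ cannot vanish identically at any positive time. Two cases arise.

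If $m\ne0$, Corollary~\ref{cor:dipole-point} (or Lemma~\ref{lem:sector}) already gives a point $(s_*,x_*)$ with $s_*>0$ and $v(s_*,x_*)>0$. If only $u_0\not\equiv0$ is assumed, we argue as follows. Since $u_0\in L^1$ and $u_0\not\equiv 0$, we have $v(s)\to u_0$ in $L^1$ as $s\downarrow0$. Hence $v(s)\not\equiv0$ for some small $s>0$. Because $\int v(s)\,dx=\int u_0\,dx=0$ and $v(s)\not\equiv0$, the function $v(s)$ must take strictly positive values somewhere. So in either case we obtain a point $(s_*,x_*)$ with $s_*>0$ and $v(s_*,x_*)=:2c_0>0$.

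Next we use regularity. For $s\ge s_*/2$, the function $v(s,x)=\int G(s,x-y)u_0(y)\,dy$ is jointly continuous; in fact it is smooth. Indeed, $G$ and its derivatives are bounded uniformly for $s\ge s_*/2$, and $u_0\in L^1$, so dominated convergence gives continuity in $(s,x)$. By continuity there is $\delta\in(0,s_*/2)$ such that $v(s,x)\ge c_0$ whenever $|s-s_*|\le\delta$ and $|x-x_*|\le\delta$. We then set $s_0=s_*-\delta$, $s_1=s_*+\delta$, and $E=B(x_*,\delta)$. This $E$ is bounded and has positive measure, and \eqref{eq:cylinder} holds on $[s_0,s_1]\times E$.

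The only point needing care is the nonvanishing of $v(s)$ at some positive time without using $m\ne0$. We handle it either by the $L^1$ continuity at $s=0$ described above, or, alternatively, by injectivity of the heat semigroup: $\widehat{v(s)}=e^{-s|\xi|^2}\widehat{u_0}\not\equiv0$. Combined with the zero mean, which is preserved by the flow, either argument forces a positive value. The remaining steps are routine continuity estimates.
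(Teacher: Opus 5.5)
Your proof is correct and follows essentially the same route as the paper. You find a point $(s_*,x_*)$ with $v(s_*,x_*)>0$ from $v(s_*)\not\equiv0$, the conserved zero mass and continuity, then thicken it to a cylinder by joint continuity of $v$; the paper gets $v(s_*)\not\equiv0$ from injectivity of the heat semigroup at an arbitrary fixed $s_*>0$, which you list as your alternative, and your separate case $m\ne0$ is unnecessary since the $u_0\not\equiv0$ argument already covers it.
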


\begin{proof}
Fix any $s_*>0$. The heat semigroup is injective, so
$v(s_*)\not\equiv0$ because $u_0\not\equiv0$. Also,
\[
\int_{\R^n}v(s_*,x)\,dx
=
\int_{\R^n}u_0(x)\,dx
=0.
\]
Since $v(s_*)$ is continuous and integrable, it cannot be everywhere
nonpositive unless it vanishes identically. Hence there exists $x_*\in\R^n$
such that
\[
v(s_*,x_*)>0.
\]
Continuity of $v$ in $(s,x)$ gives a small time interval
$[s_0,s_1]$ around $s_*$ and a bounded ball $E$ around $x_*$ on which
$v\ge c_0>0$.
\end{proof}

\subsection{The generated-mass regime}

We now assume
\[
p_m<p<p_F.
\]

\begin{proposition}
\label{prop:mass-regime}
If
\[
1+\frac1{n+1}<p<1+\frac2n,
\]
then
\begin{equation}
T_\eps
\le
C\eps^{-\frac{2p(p-1)}{2-n(p-1)}}
\label{eq:mass-bound}
\end{equation}
for all sufficiently small $\eps>0$.
\end{proposition}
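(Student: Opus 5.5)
We will use the backward Gaussian criterion of Lemma \ref{lem:gaussian}, now starting from the time $t_0=s_1$ of Lemma \ref{lem:cylinder} rather than from $t_0=0$. The idea is that by time $s_1$ the source has already generated a positive mass of order $\eps^p$ on a bounded region. A backward Gaussian at a very large time $T$ then sees this mass like a Gaussian mass term of size $\eps^p T^{-n/2}$. The dipole part of $u(s_1)$ is only of size $\eps T^{-(n+1)/2}$, and the problem is that this part may have the wrong sign.

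\textbf{Step 1 (mass generated on the cylinder).} Write the Duhamel formula from time $s_1$. Lemma \ref{lem:domination} and Lemma \ref{lem:cylinder} give
\[
w:=\int_0^{s_1}e^{(s_1-s)\Delta}|u(s)|^p\,ds\ \ge\ \eps^p c_0^p\int_{s_0}^{s_1}e^{(s_1-s)\Delta}\mathbf 1_E\,ds\ \ge 0 .
\]
In particular $w\ge 0$, and its restriction to a fixed ball $B_R\supset E$ has integral at least $c\,\eps^p$. Then $u(s_1)=\eps v(s_1)+w$.

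\textbf{Step 2 (lower bound for $F(s_1)$).} Take $z=0$ and $\Phi(t,x)=G(T-t,x)$ with $T$ large. Since $w\ge0$, for $T\ge 2s_1$ we get
\[
\int w\,\Phi(s_1)\ge \int_{B_R}w\,G(T-s_1,x)\,dx\ge c\eps^p T^{-n/2},
\]
because $G(T-s_1,x)\ge cT^{-n/2}$ on $B_R$. For the linear part, $\int \eps v(s_1)\Phi(s_1)\,dx=\eps v(T,0)$. By Lemma \ref{lem:dipole} at $\xi=0$, where the limit $-m\cdot\nabla G(1,0)$ vanishes, we have $|v(T,0)|=o(T^{-(n+1)/2})$. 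In any case the zero-mass estimate used in that proof gives $|v(T,0)|\le C\|\,|x|u_0\|_1T^{-(n+1)/2}$, and this crude bound is all we need. Hence
\[
F(s_1)\ge c\eps^pT^{-n/2}-C\eps T^{-(n+1)/2}.
\]
The first term dominates as soon as $T^{1/2}\ge 2(C/c)\eps^{1-p}$, i.e.\ $T\gtrsim\eps^{-2(p-1)}$. We will check that this holds at the target scale: $\frac{2p(p-1)}{2-n(p-1)}>2(p-1)$ is equivalent to $p>2-n(p-1)$, i.e.\ $p>1+\frac1{n+1}=p_m$, which is exactly our range. So $F(s_1)\ge \frac c2\eps^pT^{-n/2}>0$.

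\textbf{Step 3 (conclusion).} Argue by contradiction as in Proposition \ref{prop:dipole}. Suppose $T_\eps>T:=K\eps^{-A_2}$ with $A_2=\frac{2p(p-1)}{2-n(p-1)}=p/d_2$, where $d_2=\frac1{p-1}-\frac n2>0$ because $p<p_F$. Inequality \eqref{eq:gaussian-necessary} with $t_0=s_1$ and $T-s_1\ge T/2$ gives $\eps^pT^{-n/2}\le CT^{-1/(p-1)}$, that is, $\eps^pT^{d_2}\le C$. Substituting $T=K\eps^{-A_2}$ turns this into $K^{d_2}\le C$, which is false for $K$ large. We must also verify that $T$ exceeds both $2s_1$ and the threshold of Step 2 for small $\eps$; this follows from the exponent comparison in Step 2.

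\textbf{Main obstacle.} The delicate step is Step 2: controlling the possibly negative dipole contribution against the generated mass. Placing the Gaussian center at $z=0$ and using only the $L^1_1$ cancellation bound should suffice. The strict inequality $p>p_m$ is exactly what makes $\eps^pT^{-n/2}$ beat $\eps T^{-(n+1)/2}$ on the relevant time scale, so this step is also where the hypothesis $p>p_m$ enters.
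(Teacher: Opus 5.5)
Your proposal is correct and is essentially the paper's proof: backward Gaussian centered at the origin, the fixed positive cylinder of Lemma~\ref{lem:cylinder} producing a contribution $c\eps^pT^{-n/2}$, the zero-mass bound $|\eps(e^{T\Delta}u_0)(0)|\le C\eps T^{-(n+1)/2}$, and the condition $p>p_m$ making the generated term dominate at $T=K\eps^{-A_2}$. The only difference is that you apply the criterion at $t_0=s_1$, whereas the paper uses $t_0=T/2$; this changes nothing, since $F$ is nondecreasing and $T-t_0\ge T/2$ in both cases.
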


\begin{proof}
Set
\[
A_2
:=
\frac{2p(p-1)}{2-n(p-1)}.
\]
Assume, for contradiction, that for some large $K>1$ and a sequence
$\eps\downarrow0$,
\[
T_\eps>K\eps^{-A_2}.
\]
Put
\[
T=K\eps^{-A_2}.
\]
We use the backward Gaussian centered at $z=0$ and evaluate it at $t=T/2$.
By the semigroup property and the mild formula,
\begin{align}
F(T/2)
&=
\int_{\R^n}u(T/2,x)G(T/2,x)\,dx
\nonumber\\
&=
\eps(e^{T\Delta}u_0)(0)
+
\int_0^{T/2}
\left(e^{(T-s)\Delta}|u(s)|^p\right)(0)\,ds.
\label{eq:Fhalf}
\end{align}

We estimate the linear term. Since $\int u_0=0$,
\[
(e^{T\Delta}u_0)(0)
=
\int_{\R^n}
\bigl(G(T,-y)-G(T,0)\bigr)u_0(y)\,dy.
\]
Using
\[
\|\nabla G(T)\|_\infty
\le C T^{-(n+1)/2},
\]
we obtain
\begin{equation}
\left|(e^{T\Delta}u_0)(0)\right|
\le
C T^{-(n+1)/2}
\int_{\R^n}|y||u_0(y)|\,dy.
\label{eq:linear-origin}
\end{equation}

For the nonlinear term, use Lemma \ref{lem:cylinder}. For small $\eps$,
$T/2>s_1$. On $[s_0,s_1]\times E$, Lemma \ref{lem:domination} gives
\[
|u(s,x)|^p\ge c\eps^p.
\]
Since $s$ and $x$ remain in a fixed bounded set, for all sufficiently large
$T$,
\[
G(T-s,x)\ge cT^{-n/2},
\qquad (s,x)\in[s_0,s_1]\times E.
\]
Therefore
\begin{equation}
\int_0^{T/2}
\left(e^{(T-s)\Delta}|u(s)|^p\right)(0)\,ds
\ge
c\eps^pT^{-n/2}.
\label{eq:fixed-mass}
\end{equation}
Combining \eqref{eq:Fhalf}, \eqref{eq:linear-origin}, and
\eqref{eq:fixed-mass},
\begin{equation}
F(T/2)
\ge
c\eps^pT^{-n/2}
-
C\eps T^{-(n+1)/2}.
\label{eq:Fhalf-two}
\end{equation}

The ratio of the error term to the positive term is bounded by
\[
C\eps^{1-p}T^{-1/2}
=
CK^{-1/2}
\eps^{A_2/2-(p-1)}.
\]
A direct calculation gives
\begin{equation}
\frac{A_2}{2}-(p-1)
=
\frac{(p-1)\bigl((n+1)(p-1)-1\bigr)}
{2-n(p-1)}.
\label{eq:crossover-algebra}
\end{equation}
The denominator is positive because $p<p_F$, while the numerator is positive
precisely because $p>p_m$. Thus the ratio tends to zero as $\eps\downarrow0$.
Consequently,
\begin{equation}
F(T/2)\ge c\eps^pT^{-n/2}
\label{eq:Fhalf-positive}
\end{equation}
for small $\eps$.

Applying Lemma \ref{lem:gaussian} with $t_0=T/2$ gives
\[
c\eps^pT^{-n/2}
\le
C T^{-1/(p-1)}.
\]
Hence
\begin{equation}
\eps^p T^{d_2}\le C,
\qquad
d_2:=\frac1{p-1}-\frac n2>0.
\label{eq:d2-ineq}
\end{equation}
Since
\[
A_2=\frac{p}{d_2},
\]
substitution of $T=K\eps^{-A_2}$ yields
\[
K^{d_2}\le C,
\]
which is impossible if $K$ is chosen sufficiently large. This proves
\eqref{eq:mass-bound}.
\end{proof}

\subsection{The logarithmic transition}

We now treat
\[
p=p_m=1+\frac1{n+1}.
\]
At this exponent the fixed-cylinder estimate is not enough to separate the
positive nonlinear contribution from the dipole error on the common power
scale. Instead we accumulate the source over the expanding self-similar
positive sector of Lemma \ref{lem:sector}.

\begin{proposition}
\label{prop:borderline}
Let
\[
p=1+\frac1{n+1}.
\]
Then
\begin{equation}
T_\eps
\le
C
\eps^{-2/(n+1)}
\left(\log\frac1\eps\right)^{-2/(n+2)}
\label{eq:borderline-bound}
\end{equation}
for all sufficiently small $\eps>0$.
\end{proposition}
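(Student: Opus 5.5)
The argument follows the scheme of Proposition \ref{prop:mass-regime}: use the backward Gaussian centred at $z=0$, evaluate it at $t_0=T/2$, and then apply Lemma \ref{lem:gaussian}. The difference is in how the generated mass is bounded below. Instead of the fixed cylinder of Lemma \ref{lem:cylinder}, we integrate the source over the expanding sector $\sqrt s K$ of Lemma \ref{lem:sector}.

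\textbf{Setting up the contradiction.} Assume $T_\eps>T:=K\eps^{-2/(n+1)}(\log\frac1\eps)^{-2/(n+2)}$ with $K$ large. As in \eqref{eq:Fhalf},
\[
F(T/2)=\eps(e^{T\Delta}u_0)(0)+\int_0^{T/2}\bigl(e^{(T-s)\Delta}|u(s)|^p\bigr)(0)\,ds .
\]
By \eqref{eq:linear-origin}, the linear term is at least $-C\eps T^{-(n+1)/2}$.

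\textbf{Lower bound for the source term.} For the nonlinear term we restrict to $s\in[s_0,T/2]$ and $x\in\sqrt s K$. Since $K$ is compact, $|x|\le C\sqrt s\le C\sqrt T$ there, so $G(T-s,x)\ge cT^{-n/2}$. Lemma \ref{lem:domination} together with \eqref{eq:sector-Lp}, where $a=1$ exactly at $p=p_m$, then gives
\[
\int_0^{T/2}\bigl(e^{(T-s)\Delta}|u(s)|^p\bigr)(0)\,ds\ \ge\ c\eps^pT^{-n/2}\int_{s_0}^{T/2}\frac{ds}{s}\ \ge\ c\eps^pT^{-n/2}\log T
\]
for $T$ large. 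Note that $\eps^p=\eps\cdot\eps^{1/(n+1)}$.

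\textbf{Dominating the dipole error.} The ratio of the error to the positive term is
\[
C\eps^{1-p}T^{-1/2}(\log T)^{-1}=C\eps^{-1/(n+1)}T^{-1/2}(\log T)^{-1}.
\]
With our choice of $T$ we have $T^{-1/2}=K^{-1/2}\eps^{1/(n+1)}(\log\frac1\eps)^{1/(n+2)}$ and $\log T\simeq\frac{2}{n+1}\log\frac1\eps$. The ratio is therefore of order $K^{-1/2}(\log\frac1\eps)^{1/(n+2)-1}\to0$. Hence
\[
F(T/2)\ge c\eps^pT^{-n/2}\log\tfrac1\eps .
\]

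\textbf{Conclusion via Lemma \ref{lem:gaussian}.} Inequality \eqref{eq:gaussian-necessary} gives $\eps^pT^{-n/2}\log\frac1\eps\le CT^{-1/(p-1)}=CT^{-(n+1)}$, that is,
\[
\eps^{(n+2)/(n+1)}\,T^{(n+2)/2}\,\log\tfrac1\eps\le C .
\]
Raising to the power $2/(n+2)$ yields
\[
T\le C\eps^{-2/(n+1)}(\log\tfrac1\eps)^{-2/(n+2)}.
\]
This is exactly the claimed scale, and it contradicts our choice of $T$ once $K$ is large. So $K$ is used only in this last step; the error ratio already tends to zero on its own, because of the logarithmic factor.

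\textbf{Expected main obstacle.} The delicate step is the uniform lower bound $G(T-s,x)\ge cT^{-n/2}$ on the growing sector, together with the bookkeeping of the logarithm: one must check that $\log T\asymp\log\frac1\eps$ at the chosen scale, with the $\log\log$ corrections harmless. The lower bound holds because $|x|^2/(4(T-s))\le C s/T\le C$ on the region of integration. The rest is algebra.
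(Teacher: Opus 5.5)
Your proposal is correct and follows essentially the same route as the paper's proof: the backward Gaussian centred at the origin evaluated at $T/2$, logarithmic accumulation of the source over the expanding sector of Lemma~\ref{lem:sector} (where $a=1$), domination of the $O(\eps T^{-(n+1)/2})$ dipole error at the candidate scale, and the contradiction $\eps^pT^{(n+2)/2}\log T\asymp K^{(n+2)/2}$ obtained from Lemma~\ref{lem:gaussian}. The only cosmetic point is that you use $K$ both for the large constant and for the compact sector set, which the paper avoids by naming the latter $K_0$.
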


\begin{proof}
Let $K_0\subset\R^n$ and $s_0>0$ be given by Lemma \ref{lem:sector}.
At the present exponent,
\[
a=\frac{p(n+1)-n}{2}=1,
\]
and hence
\begin{equation}
\int_{\sqrt s K_0}(v(s,x)_+)^p\,dx
\ge \frac{c}{s},
\qquad s\ge s_0.
\label{eq:critical-sector}
\end{equation}

Let $T<T_\eps$ be large and again use the backward Gaussian centered at the
origin. From \eqref{eq:Fhalf},
\[
F(T/2)
=
\eps(e^{T\Delta}u_0)(0)
+
\int_0^{T/2}\int_{\R^n}
G(T-s,x)|u(s,x)|^p\,dx\,ds.
\]
The linear contribution satisfies
\[
\eps(e^{T\Delta}u_0)(0)
\ge
-C\eps T^{-(n+1)/2}.
\]
For $s_0\le s\le T/2$ and $x\in\sqrt sK_0$, boundedness of $K_0$ gives
\[
|x|\le C\sqrt s\le C\sqrt T,
\]
and $T-s\asymp T$. Hence
\begin{equation}
G(T-s,x)\ge cT^{-n/2}.
\label{eq:kernel-lower}
\end{equation}
Using \eqref{eq:source-lower}, \eqref{eq:critical-sector}, and
\eqref{eq:kernel-lower},
\begin{align}
\int_0^{T/2}\int_{\R^n}
G(T-s,x)|u(s,x)|^p\,dx\,ds
&\ge
c\eps^pT^{-n/2}
\int_{s_0}^{T/2}\frac{ds}{s}
\nonumber\\
&\ge
c\eps^pT^{-n/2}\log T
\label{eq:log-generation}
\end{align}
for all sufficiently large $T$. Thus
\begin{equation}
F(T/2)
\ge
c\eps^pT^{-n/2}\log T
-
C\eps T^{-(n+1)/2}.
\label{eq:F-borderline}
\end{equation}

We now choose the candidate scale
\begin{equation}
T
=
K
\eps^{-2/(n+1)}
\left(\log\frac1\eps\right)^{-2/(n+2)}
\label{eq:T-borderline-choice}
\end{equation}
with $K>1$ fixed and large. The ratio of the positive term in
\eqref{eq:F-borderline} to the magnitude of the linear error is
\[
\eps^{p-1}T^{1/2}\log T.
\]
Since $p-1=1/(n+1)$, \eqref{eq:T-borderline-choice} yields
\[
\eps^{p-1}T^{1/2}\log T
\asymp
K^{1/2}
\left(\log\frac1\eps\right)^{(n+1)/(n+2)}
\longrightarrow\infty.
\]
Therefore, for sufficiently small $\eps$,
\begin{equation}
F(T/2)
\ge
c\eps^pT^{-n/2}\log T.
\label{eq:F-borderline-positive}
\end{equation}

If $T<T_\eps$, Lemma \ref{lem:gaussian} implies
\[
c\eps^pT^{-n/2}\log T
\le
C T^{-1/(p-1)}.
\]
Now
\[
\frac1{p-1}=n+1,
\]
so
\begin{equation}
\eps^pT^{(n+2)/2}\log T\le C.
\label{eq:borderline-necessary}
\end{equation}
For $T$ given by \eqref{eq:T-borderline-choice},
\[
\eps^pT^{(n+2)/2}\log T
\asymp
K^{(n+2)/2}
\]
because $p=(n+2)/(n+1)$ and
$\log T\asymp\log(1/\eps)$. Choosing $K$ sufficiently large contradicts
\eqref{eq:borderline-necessary}. Hence $T_\eps$ cannot exceed a constant
multiple of the scale in \eqref{eq:T-borderline-choice}, proving
\eqref{eq:borderline-bound}.
\end{proof}

\subsection{Upper half of Theorem \ref{thm:main} and comparison of mechanisms}

The upper estimate in Theorem \ref{thm:main} follows directly from Propositions
\ref{prop:dipole}, \ref{prop:borderline}, and \ref{prop:mass-regime}.

We record the comparison between the two algebraic mechanisms. Define
\[
A_{\mathrm d}(p)
=
\frac{2(p-1)}{2-(n+1)(p-1)}
\]
whenever the denominator is positive, and
\[
A_{\mathrm m}(p)
=
\frac{2p(p-1)}{2-n(p-1)}
\]
for $p<p_F$. Solving
\[
A_{\mathrm d}(p)=A_{\mathrm m}(p)
\]
gives
\[
p=1+\frac1{n+1}=p_m.
\]
Moreover,
\[
A_{\mathrm d}(p)<A_{\mathrm m}(p)
\quad\text{for }1<p<p_m,
\]
while
\[
A_{\mathrm m}(p)<A_{\mathrm d}(p)
\quad\text{for }p_m<p<1+\frac2{n+1}.
\]
Since an upper estimate of the form $T_\eps\lesssim\eps^{-A}$ is stronger
when $A$ is smaller, the dipole mechanism is dominant below $p_m$ and the
generated-mass mechanism is dominant above $p_m$.

This calculation explains why $p_m$ is a crossover exponent rather than a
new Fujita exponent. The genuine Fujita threshold for the positive source
remains
\[
p_F=1+\frac2n.
\]

\section{Subcritical regime: proof of the lower lifespan estimates}
\subsection{Cancellation estimates for the linear heat flow}
\label{sec:linear-cancellation}

The lower lifespan estimates require a norm in which the zero-mass cancellation
is visible. The following elementary estimates will be used repeatedly.

\begin{lemma}[Zero-mass heat estimates]
\label{lem:zero-mass-decay}
Assume $u_0\in L^1(\R^n)\cap L^\infty(\R^n)$,
$|x|u_0\in L^1(\R^n)$ and $\int u_0=0$. Let
\[
v(t)=e^{t\Delta}u_0.
\]
Then there exists $C>0$ such that for all $t\ge0$,
\begin{equation}
\|v(t)\|_1\le C(1+t)^{-1/2},
\qquad
\|v(t)\|_\infty\le C(1+t)^{-(n+1)/2}.
\label{eq:linear-L1-Linf}
\end{equation}
More generally, for $1\le q\le\infty$ and $t\ge1$,
\begin{equation}
\|v(t)\|_q
\le
C t^{-\frac12-\frac n2(1-1/q)}\,\||x|u_0\|_1.
\label{eq:linear-Lq-cancel}
\end{equation}
\end{lemma}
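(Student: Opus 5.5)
The plan is to write $v(t,x)$ using the zero-mass cancellation, exactly as in the proof of Lemma \ref{lem:dipole}:
\[
v(t,x)=\int_{\R^n}\bigl(G(t,x-y)-G(t,x)\bigr)u_0(y)\,dy .
\]
The general estimate \eqref{eq:linear-Lq-cancel} is the heart of the matter, so I would prove it first. The short-time parts of \eqref{eq:linear-L1-Linf} come from the trivial contraction bounds, and the long-time parts are the cases $q=1$ and $q=\infty$ of \eqref{eq:linear-Lq-cancel}.

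The first step is to write the difference by the fundamental theorem of calculus along the segment from $x$ to $x-y$:
\[
G(t,x-y)-G(t,x)=-\int_0^1 y\cdot\nabla G(t,x-\theta y)\,d\theta .
\]
Inserting this gives
\[
|v(t,x)|\le\int_0^1\!\!\int_{\R^n}|y|\,|u_0(y)|\,|\nabla G(t,x-\theta y)|\,dy\,d\theta .
\]
I would then take the $L^q_x$ norm and apply Minkowski's integral inequality, moving the norm inside the $y$ and $\theta$ integrals. Translation invariance of the $L^q$ norm gives $\|\nabla G(t,\cdot-\theta y)\|_q=\|\nabla G(t)\|_q$. Scaling gives
\[
\|\nabla G(t)\|_q=t^{-\frac12-\frac n2(1-1/q)}\|\nabla G(1)\|_q,
\]
which is finite for every $1\le q\le\infty$. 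Combining these yields \eqref{eq:linear-Lq-cancel} for all $t>0$, not only $t\ge1$, with constant $\|\nabla G(1)\|_q$.

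For \eqref{eq:linear-L1-Linf}, I would split into two time ranges. For $0\le t\le1$, the semigroup contracts in $L^1$ and $L^\infty$, so $\|v(t)\|_1\le\|u_0\|_1$ and $\|v(t)\|_\infty\le\|u_0\|_\infty$. Both are bounded by a constant times $(1+t)^{-1/2}$ and $(1+t)^{-(n+1)/2}$ respectively on that interval. For $t\ge1$, I would use \eqref{eq:linear-Lq-cancel} with $q=1$ (exponent $-1/2$) and $q=\infty$ (exponent $-(n+1)/2$), together with $t\ge(1+t)/2$. The resulting constant $C$ depends on $\|u_0\|_1$, $\|u_0\|_\infty$ and $\||x|u_0\|_1$.

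I expect no step to be a real obstacle. The only point needing care is the justification of Minkowski's inequality and measurability, which holds because the integrand is nonnegative and jointly measurable. A second check is that the constant $\|\nabla G(1)\|_q$ is finite uniformly in $q$, which is true since $\nabla G(1)$ is Schwartz.
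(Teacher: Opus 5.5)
Your proposal is correct and follows essentially the same route as the paper. Both arguments rewrite $v$ using $\int u_0=0$, apply the fundamental theorem of calculus along the segment from $x$ to $x-y$, and use Minkowski's inequality with translation invariance and Gaussian scaling of $\|\nabla G(t)\|_q$; the bounds for $0\le t\le 1$ then come from contraction and those for $t\ge1$ from the cases $q=1,\infty$. Your remark that the cancellation estimate in fact holds for all $t>0$, with constant $\|\nabla G(1)\|_q$, is a correct and harmless strengthening of what the paper states.
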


\begin{proof}
Since $\int u_0=0$,
\[
v(t,x)=\int_{\R^n}\bigl(G(t,x-y)-G(t,x)\bigr)u_0(y)\,dy.
\]
By the fundamental theorem of calculus,
\[
G(t,x-y)-G(t,x)
=-\int_0^1 y\cdot\nabla G(t,x-\theta y)\,d\theta.
\]
Minkowski's inequality and translation invariance therefore give
\[
\|v(t)\|_q
\le
\||x|u_0\|_1\,\|\nabla G(t)\|_q.
\]
The Gaussian scaling yields
\[
\|\nabla G(t)\|_q
=C_q t^{-\frac12-\frac n2(1-1/q)},
\]
which proves \eqref{eq:linear-Lq-cancel}. For $0<t\le1$ we combine the
semigroup contractions
\[
\|e^{t\Delta}u_0\|_1\le\|u_0\|_1,
\qquad
\|e^{t\Delta}u_0\|_\infty\le\|u_0\|_\infty
\]
with the large-time estimates. This gives \eqref{eq:linear-L1-Linf}.
\end{proof}

\subsection{A unified nonlinear bootstrap for lower lifespan estimates}
\label{sec:lower-bootstrap}

Write the mild solution as
\begin{equation}
u(t)=\eps v(t)+w(t),
\qquad
w(t)=\int_0^t e^{(t-s)\Delta}|u(s)|^p\,ds.
\label{eq:lower-decomp}
\end{equation}
Because the heat kernel and the source are nonnegative,
\[
w(t,x)\ge0.
\]
Define
\begin{equation}
A(t):=\|w(t)\|_1
=\int_0^t\|u(s)\|_p^p\,ds
\label{eq:A-def}
\end{equation}
and
\begin{equation}
H(t):=\eps(1+t)^{-1/2}+A(t).
\label{eq:H-def}
\end{equation}
The equality in \eqref{eq:A-def} follows from Tonelli's theorem and preservation
of mass by the heat semigroup. In particular, $A$ is nondecreasing and
\begin{equation}
A'(t)=\|u(t)\|_p^p
\quad\text{for a.e. }t<T_\eps.
\label{eq:A-derivative}
\end{equation}
Lemma \ref{lem:zero-mass-decay} immediately implies
\begin{equation}
\|u(t)\|_1\le C H(t).
\label{eq:L1-H}
\end{equation}
The $L^\infty$ estimate requires a bootstrap because the Duhamel term has a
nonzero mass.

\begin{lemma}[Bootstrap $L^\infty$ estimate]
\label{lem:Linfty-bootstrap}
There exist constants $\eta>0$ and $C>0$, depending only on $n,p,u_0$, with
the following property. Let $0<S<T_\eps$ and suppose
\begin{equation}
\sup_{0\le t\le S}
H(t)^{p-1}(1+t)^{1-\frac n2(p-1)}
\le\eta.
\label{eq:Q-small}
\end{equation}
Then
\begin{equation}
\|u(t)\|_\infty
\le
C(1+t)^{-n/2}H(t),
\qquad 0\le t\le S.
\label{eq:Linfty-H}
\end{equation}
Consequently,
\begin{equation}
A'(t)
\le
C(1+t)^{-\frac n2(p-1)}
\left[\eps(1+t)^{-1/2}+A(t)\right]^p
\label{eq:A-master}
\end{equation}
for almost every $t\in(0,S)$.
\end{lemma}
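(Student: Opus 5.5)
We would run a continuity argument on the weighted quantity
\[
N(t):=\sup_{0\le \tau\le t}(1+\tau)^{n/2}H(\tau)^{-1}\|u(\tau)\|_\infty ,
\]
which is finite and continuous in $t$ on $[0,S]$ because $u$ is a bounded mild solution on $[0,S]$ and $H\ge \eps(1+S)^{-1/2}>0$. For the linear part, Lemma \ref{lem:zero-mass-decay} gives
\[
\eps\|v(t)\|_\infty\le C\eps(1+t)^{-(n+1)/2}\le C(1+t)^{-n/2}H(t).
\]
So it suffices to estimate the Duhamel term $w$ in \eqref{eq:lower-decomp} in $L^\infty$, and to show that $N(t)\le C_1+C_2\eta N(t)^{p}$ or a similar closing inequality. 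Choosing $\eta$ small then forces $N\le 2C_1$ by continuity. At $t=0$ we have $N(0)\le \|u_0\|_\infty\cdot$const, since $H(0)=\eps$.

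For $w(t)$ we split the Duhamel integral at $t/2$. On $[0,t/2]$ we use the $L^1\to L^\infty$ smoothing $\|e^{(t-s)\Delta}f\|_\infty\le C(t-s)^{-n/2}\|f\|_1$, with $(t-s)\ge t/2$, together with $\int_0^{t/2}\|u(s)\|_p^p\,ds=A(t/2)\le A(t)\le H(t)$. This gives the bound $C(1+t)^{-n/2}H(t)$ for $t\ge1$; for $t\le1$ we simply use $\|e^{(t-s)\Delta}f\|_\infty\le\|f\|_\infty$. On $[t/2,t]$ we use the $L^\infty$ contraction and interpolation:
\[
\||u(s)|^p\|_\infty=\|u(s)\|_\infty^{p}\le N^p(1+s)^{-np/2}H(s)^p .
\]
Since $H$ is comparable on $[t/2,t]$ to $H(t)$ up to constants (the $\eps$-part decays only polynomially, and $A$ is nondecreasing, so $H(s)\le C H(t)$ for $s\in[t/2,t]$), we get
\[
\int_{t/2}^t\||u|^p\|_\infty\,ds\le C N^p (1+t)^{1-np/2}H(t)^p
= C N^p\,(1+t)^{-n/2}H(t)\cdot\bigl[H(t)^{p-1}(1+t)^{1-\frac n2(p-1)}\bigr].
\]
By \eqref{eq:Q-small}, the bracket is at most $\eta$. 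Hence $N(t)\le C_1+C_2\eta N(t)^p$, and the standard continuity argument closes with $N\le 2C_1$ once $C_2\eta(2C_1)^{p}<C_1$. This gives \eqref{eq:Linfty-H}.

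For \eqref{eq:A-master} we use \eqref{eq:A-derivative}, interpolation $\|u\|_p^p\le\|u\|_\infty^{p-1}\|u\|_1$, \eqref{eq:L1-H} and \eqref{eq:Linfty-H}:
\[
A'(t)\le C(1+t)^{-\frac n2(p-1)}H(t)^p .
\]
This is exactly the claim.

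The main obstacle is the near-diagonal piece $[t/2,t]$, where the nonlinear term must be absorbed. The smallness condition \eqref{eq:Q-small} is designed precisely to make the factor in front of $N^p$ small. One also has to check carefully the comparability $H(s)\lesssim H(t)$ for $s\in[t/2,t]$ and the small-time regime $t\le1$, where weights of the form $(1+t)$ are harmless.
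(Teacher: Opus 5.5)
Your proposal is correct and follows the paper's proof essentially step by step: the same weighted quotient (the paper's $X(S)$), the split of the Duhamel term at $t/2$ with $L^1$--$L^\infty$ smoothing on the old part and $L^\infty$ contraction plus $H(s)\le CH(t)$ on the recent part, the closing inequality $N\le C_1+C_2\eta N^p$, and the interpolation $\|u\|_p^p\le\|u\|_\infty^{p-1}\|u\|_1$ to get \eqref{eq:A-master}. The only deviation is at small times: the paper appeals to a local fixed-point bound $\|u(t)\|_\infty\le C\eps$ for $t\le2$ under a separate smallness assumption on $\eps$, whereas you absorb $t\le1$ into the same bootstrap via the $L^\infty$ contraction, which is slightly cleaner because the needed smallness of $\eps$ is already contained in \eqref{eq:Q-small} at $t=0$.
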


\begin{proof}
For $0\le t\le2$, the standard local fixed-point estimate in $L^\infty$ gives
$\|u(t)\|_\infty\le C\eps$ for sufficiently small $\eps$; since
$H(t)\ge c\eps$ and $1+t\asymp1$, this is consistent with
\eqref{eq:Linfty-H}. We therefore concentrate on $t\ge2$.

Split the nonlinear term in \eqref{eq:lower-decomp} as
\[
w(t)=w_{\mathrm{old}}(t)+w_{\mathrm{rec}}(t),
\]
where the time integrals are over $[0,t/2]$ and $[t/2,t]$, respectively.
The $L^1$--$L^\infty$ heat estimate gives
\begin{align}
\|w_{\mathrm{old}}(t)\|_\infty
&\le
C\int_0^{t/2}(t-s)^{-n/2}\|u(s)\|_p^p\,ds\notag\\
&\le C t^{-n/2}A(t).
\label{eq:old-Duhamel}
\end{align}

To treat the recent part, introduce the bootstrap quotient
\[
X(S):=
\sup_{0\le t\le S}
\frac{(1+t)^{n/2}\|u(t)\|_\infty}{H(t)}.
\]
For $s\in[t/2,t]$, monotonicity of $A$ and comparability of $1+s$ and $1+t$
give
\[
H(s)\le C H(t).
\]
Using the $L^\infty$ contraction of the heat semigroup, we obtain
\begin{align}
\|w_{\mathrm{rec}}(t)\|_\infty
&\le
\int_{t/2}^t\|u(s)\|_\infty^p\,ds\notag\\
&\le
C X(S)^p H(t)^p
\int_{t/2}^t(1+s)^{-np/2}\,ds\notag\\
&\le
C X(S)^p H(t)^p(1+t)^{1-np/2}\notag\\
&=
C X(S)^p(1+t)^{-n/2}H(t)
\left[H(t)^{p-1}(1+t)^{1-\frac n2(p-1)}\right].
\label{eq:recent-Duhamel}
\end{align}
The linear contribution satisfies, by Lemma \ref{lem:zero-mass-decay},
\[
\eps\|v(t)\|_\infty
\le
C(1+t)^{-n/2}H(t).
\]
Combining this with \eqref{eq:old-Duhamel}, \eqref{eq:recent-Duhamel} and
\eqref{eq:Q-small} yields
\[
X(S)\le C_0+C_1\eta X(S)^p.
\]
Choose $\eta>0$ so small that the usual continuity argument closes at, say,
$X(S)\le2C_0$. This proves \eqref{eq:Linfty-H}.

Finally, interpolation between $L^1$ and $L^\infty$ gives
\[
A'(t)=\|u(t)\|_p^p
\le
\|u(t)\|_\infty^{p-1}\|u(t)\|_1.
\]
Using \eqref{eq:L1-H} and \eqref{eq:Linfty-H} proves
\eqref{eq:A-master}.
\end{proof}

\begin{remark}
Lemma \ref{lem:Linfty-bootstrap} is the central lower-bound estimate. It
separates the cancellation-bearing linear contribution
$\eps(1+t)^{-1/2}$ in $L^1$ from the nonlinear generated mass $A(t)$. A
crude estimate based on $\eps\|u_0\|_1+A(t)$ would lose the first-moment
improvement and would not recover the dipole-dominated lifespan.
\end{remark}

\subsection{Proof of the lower half of Theorem \ref{thm:main}}
\label{sec:lower-subcritical}

Set
\[
r:=p-1,
\qquad
a:=\frac{p(n+1)-n}{2}
=\frac{1+(n+1)r}{2}.
\]
We prove the three regimes separately. In each case the argument is first
carried out on $[0,S]$ with
$S<\min\{T,T_\eps\}$. The resulting bounds are uniform in $S$. If
$T_\eps\le T$, the $L^\infty$ blow-up alternative then extends the solution
past $T_\eps$, a contradiction. Hence $T_\eps>T$.

\paragraph{The dipole-dominated range $1<p<p_m$}

Here
\[
r<\frac1{n+1},
\qquad
a<1.
\]
Define
\[
\theta:=1-\frac{n+1}{2}r>0,
\qquad
\gamma:=1-a=\frac{1-(n+1)r}{2}>0.
\]
Fix $K\gg1$ and bootstrap
\begin{equation}
A(t)\le K\eps^p(1+t)^\gamma.
\label{eq:A-bootstrap-low}
\end{equation}
Choose
\begin{equation}
T=c_0\eps^{-r/\theta}
=c_0\eps^{-\frac{2(p-1)}{2-(n+1)(p-1)}},
\label{eq:T-lower-low}
\end{equation}
where $c_0>0$ will be chosen small. From \eqref{eq:A-bootstrap-low},
\begin{align*}
\frac{A(t)}{\eps(1+t)^{-1/2}}
&\le
K\eps^r(1+t)^{\gamma+1/2}\\
&=K\eps^r(1+t)^\theta
\le C K c_0^\theta,
\qquad t\le T.
\end{align*}
Thus, for $c_0$ sufficiently small,
\begin{equation}
H(t)\le C\eps(1+t)^{-1/2}.
\label{eq:H-linear-low}
\end{equation}
Moreover,
\[
H(t)^r(1+t)^{1-nr/2}
\le
C\eps^r(1+t)^\theta
\le Cc_0^\theta,
\]
so the smallness hypothesis of Lemma \ref{lem:Linfty-bootstrap} holds.
Equation \eqref{eq:A-master} then gives
\begin{align*}
A'(t)
&\le
C\eps^p(1+t)^{-nr/2-p/2}\\
&=C\eps^p(1+t)^{-a}.
\end{align*}
Since $a<1$,
\[
A(t)\le C\eps^p(1+t)^{1-a}
=C\eps^p(1+t)^\gamma.
\]
Taking $K$ larger than the last constant closes the bootstrap. Consequently,
\begin{equation}
T_\eps\ge
c\eps^{-\frac{2(p-1)}{2-(n+1)(p-1)}}.
\label{eq:lower-dipole-final}
\end{equation}

\paragraph{The transition power $p=p_m$}

Let
\[
p=p_m=1+\frac1{n+1},
\qquad r=\frac1{n+1}.
\]
Then $a=1$. Set $L(t):=\log(e+t)$ and bootstrap
\begin{equation}
A(t)\le K\eps^p L(t).
\label{eq:A-bootstrap-border}
\end{equation}
Using $(x+y)^p\le C(x^p+y^p)$ in \eqref{eq:A-master}, we obtain
\begin{equation}
A'(t)
\le
C\eps^p(1+t)^{-1}
+C(1+t)^{-\frac{n}{2(n+1)}}A(t)^p.
\label{eq:A-border-diff}
\end{equation}
Under \eqref{eq:A-bootstrap-border}, integration gives
\begin{equation}
A(t)
\le
C\eps^p L(t)
+C K^p\eps^{p^2}
(1+t)^{\frac{n+2}{2(n+1)}}L(t)^p.
\label{eq:A-border-integrated}
\end{equation}
Thus the second term can be absorbed into $K\eps^pL(t)$ whenever
\begin{equation}
\eps^{p(p-1)}
(1+T)^{\frac{n+2}{2(n+1)}}
L(T)^{p-1}
\le c_1.
\label{eq:border-lower-condition}
\end{equation}
Take
\begin{equation}
T=c_0
\eps^{-\frac2{n+1}}
\left(\log\frac1\eps\right)^{-\frac2{n+2}}.
\label{eq:T-lower-border}
\end{equation}
Since $p=(n+2)/(n+1)$ and $L(T)\asymp\log(1/\eps)$,
condition \eqref{eq:border-lower-condition} holds for small $c_0$.

It remains to verify the smallness condition \eqref{eq:Q-small}. From the
bootstrap,
\begin{align*}
H(t)^r(1+t)^{1-\frac{nr}{2}}
&\le C\eps^r(1+t)^{1/2}
+C\eps^{pr}L(t)^r
(1+t)^{\frac{n+2}{2(n+1)}}.
\end{align*}
At the scale \eqref{eq:T-lower-border}, the first term is
$O((\log(1/\eps))^{-1/(n+2)})$, while the second is made uniformly small by
choosing $c_0$ small, exactly as in \eqref{eq:border-lower-condition}.
Lemma \ref{lem:Linfty-bootstrap} is therefore applicable and the bootstrap
closes. We conclude
\begin{equation}
T_\eps\ge
c\eps^{-\frac2{n+1}}
\left(\log\frac1\eps\right)^{-\frac2{n+2}}.
\label{eq:lower-border-final}
\end{equation}

\paragraph{The generated-mass range $p_m<p<p_F$}

Now
\[
a>1,
\qquad
\beta:=1-\frac n2(p-1)>0.
\]
Bootstrap
\begin{equation}
A(t)\le K\eps^p.
\label{eq:A-bootstrap-mass}
\end{equation}
Again using $(x+y)^p\le C(x^p+y^p)$ in \eqref{eq:A-master}, we find
\[
A'(t)
\le
C\eps^p(1+t)^{-a}
+C(1+t)^{-\frac n2(p-1)}A(t)^p.
\]
Since $a>1$, integration and \eqref{eq:A-bootstrap-mass} yield
\begin{equation}
A(t)
\le
C\eps^p
+C K^p\eps^{p^2}(1+t)^\beta.
\label{eq:A-mass-integrated}
\end{equation}
Choose
\begin{equation}
T=c_0\eps^{-\frac{p(p-1)}{\beta}}
=c_0\eps^{-\frac{2p(p-1)}{2-n(p-1)}}.
\label{eq:T-lower-mass}
\end{equation}
Then
\[
\eps^{p(p-1)}T^\beta=c_0^\beta,
\]
so the second term in \eqref{eq:A-mass-integrated} is absorbed by the
bootstrap for $c_0$ sufficiently small.

We also verify \eqref{eq:Q-small}. The generated-mass part satisfies
\[
(\eps^p)^r(1+T)^\beta
\le Cc_0^\beta.
\]
For the linear part, if
$1-(n+1)r/2\le0$ there is nothing to prove. Otherwise,
\begin{align*}
\eps^r T^{1-(n+1)r/2}
&=c_0^{1-(n+1)r/2}
\eps^{\frac{r^2((n+1)r-1)}{2-nr}},
\end{align*}
which tends to zero because $r>1/(n+1)$. Thus Lemma
\ref{lem:Linfty-bootstrap} applies and the bootstrap closes. Hence
\begin{equation}
T_\eps\ge
c\eps^{-\frac{2p(p-1)}{2-n(p-1)}}.
\label{eq:lower-mass-final}
\end{equation}

Combining \eqref{eq:lower-dipole-final}, \eqref{eq:lower-border-final} and
\eqref{eq:lower-mass-final} with the upper estimates proved in the previous
sections completes the proof of Theorem \ref{thm:main}.

\section{Critical regime: proof of the upper and lower bounds}
\subsection{The Fujita endpoint: the lower bound}
\label{sec:critical-lower}

We first prove the lower estimate in \eqref{eq:sharp-critical}. The argument is
the limiting case of the generated-mass bootstrap, but it is useful to present
it separately because the algebraic time integral becomes logarithmic.

\begin{proof}[Proof of the lower bound in \eqref{eq:sharp-critical}]
Let
\[
p=p_F=1+\frac2n,
\qquad r=p-1=\frac2n.
\]
Then
\[
\frac n2r=1.
\]
Bootstrap, as in the generated-mass regime,
\begin{equation}
A(t)\le K\eps^p.
\label{eq:A-bootstrap-critical}
\end{equation}
The smallness quantity in Lemma \ref{lem:Linfty-bootstrap} now becomes simply
\[
H(t)^r,
\]
because $1-nr/2=0$. Under \eqref{eq:A-bootstrap-critical},
\[
H(t)^r\le C(\eps^r+\eps^{pr}),
\]
which is uniformly small for small $\eps$, independently of time.
Therefore Lemma \ref{lem:Linfty-bootstrap} applies on every interval on which
the bootstrap holds.

The master inequality \eqref{eq:A-master} gives
\begin{align}
A'(t)
&\le
C(1+t)^{-1}
\left[\eps(1+t)^{-1/2}+A(t)\right]^p\notag\\
&\le
C\eps^p(1+t)^{-a_F}
+C(1+t)^{-1}A(t)^p,
\label{eq:A-critical-diff}
\end{align}
where
\[
a_F=\frac{p(n+1)-n}{2}>1.
\]
Hence the first term is integrable. Under
\eqref{eq:A-bootstrap-critical}, integration yields
\begin{equation}
A(t)
\le
C\eps^p
+C K^p\eps^{p^2}\log(e+t).
\label{eq:A-critical-integrated}
\end{equation}
Choose
\begin{equation}
T=\exp\!\left(c_0\eps^{-p(p-1)}\right).
\label{eq:T-critical-lower-choice}
\end{equation}
Then
\[
\eps^{p^2}\log T
=c_0\eps^{p^2-p(p-1)}
=c_0\eps^p.
\]
Choosing $c_0$ sufficiently small and $K$ sufficiently large closes
\eqref{eq:A-bootstrap-critical}. The same continuation argument used in
Section \ref{sec:lower-subcritical} then implies
\[
T_\eps\ge \exp\!\left(c\eps^{-p(p-1)}\right),
\]
which is the lower bound in \eqref{eq:sharp-critical}.
\end{proof}

\subsection{The Fujita endpoint: the critical upper bound}
\label{sec:critical-upper}

We now prove the upper estimate in \eqref{eq:sharp-critical}. The simple
backward-Gaussian ODE used in the strict subcritical range becomes scale
invariant at $p=p_F$, so a critical scale-ODE argument is required. The main
new point is that the datum obtained after shifting the solution to a fixed
positive time is still sign-changing. We therefore first prove a critical
test-function lemma that requires only positive total mass and a finite first
absolute moment.

\paragraph{A critical test-function lemma for sign-changing data}

Let
\[
p=p_F=1+\frac2n,
\qquad
p'=\frac{p}{p-1}=1+\frac n2.
\]
Choose a nonincreasing function $\eta\in C^\infty([0,\infty))$ satisfying
\[
0\le\eta\le1,\qquad
\eta(s)=1\ \text{for }0\le s\le\frac12,
\qquad
\eta(s)=0\ \text{for }s\ge1.
\]
Define the auxiliary cutoff
\[
\eta^*(s)=
\begin{cases}
0,&0\le s<1/2,\\
\eta(s),&s\ge1/2.
\end{cases}
\]
For $R>0$, set
\begin{equation}
\psi_R(t,x)
=
\left[
\eta\!\left(\frac{t+|x|^2}{R}\right)
\right]^{2p'},
\qquad
\psi_R^*(t,x)
=
\left[
\eta^*\!\left(\frac{t+|x|^2}{R}\right)
\right]^{2p'}.
\label{eq:critical-test-functions}
\end{equation}
The parameter $R$ has the dimension of time; the corresponding spatial cutoff
has radius of order $\sqrt R$.

\begin{lemma}[Critical sign-changing test-function lemma]
\label{lem:critical-sign}
Let $f\in L^1(\R^n)\cap L^\infty(\R^n)$ satisfy
\[
|x|f\in L^1(\R^n),
\qquad
M:=\int_{\R^n}f(x)\,dx>0,
\qquad
B:=\int_{\R^n}|x|\,|f(x)|\,dx<\infty.
\]
Let $v$ be a bounded mild solution of
\[
v_t-\Delta v=|v|^p,
\qquad
v(0,x)=f(x),
\qquad p=1+\frac2n,
\]
on $[0,S)$. Then
\begin{equation}
S\le
R_0\exp\!\left(CM^{-(p-1)}\right),
\qquad
R_0:=\max\left\{1,8\left(\frac BM\right)^2\right\},
\label{eq:critical-sign-lemma-bound}
\end{equation}
where $C>0$ depends only on $n$ and the fixed cutoff $\eta$.
\end{lemma}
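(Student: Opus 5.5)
The proof will follow the classical critical test-function method of Mitidieri--Pohozaev and Zhang, adapted to sign-changing data by one extra moment estimate. Fix $R\ge R_0$ with $R<S$. Test the weak form of the equation against $\psi_R$; it is supported in the parabolic region $\{t+|x|^2\le R\}$, so $t<R<S$. This gives
\[
\int_0^R\!\!\int_{\R^n}|v|^p\psi_R\,dx\,dt+\int_{\R^n}f(x)\psi_R(0,x)\,dx
=-\int_0^R\!\!\int_{\R^n}v\,(\partial_t\psi_R+\Delta\psi_R)\,dx\,dt .
\]
The identity is justified from the mild formulation in the standard way, since $v$ is bounded on $[0,R]$ and $\psi_R$ is smooth with compact support.

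\textbf{Step 1: lower bound for the data pairing.} Here $\psi_R(0,x)=\eta(|x|^2/R)^{2p'}$. This equals $1$ for $|x|^2\le R/2$ and vanishes for $|x|^2\ge R$. Writing
\[
\int f\psi_R(0)=M-\int f\bigl(1-\psi_R(0)\bigr),
\]
the error term is supported in $|x|\ge\sqrt{R/2}$. It is therefore bounded by $\sqrt{2/R}\,B$. For $R\ge R_0\ge 8(B/M)^2$ this is at most $M/2$. Hence $\int f\psi_R(0)\ge M/2$ for every $R\ge R_0$.

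\textbf{Step 2: the right-hand side is controlled only on the annulus.} Because $\eta\equiv1$ on $[0,1/2]$, the derivatives $\partial_t\psi_R$ and $\Delta\psi_R$ are supported where $\psi_R^*\neq0$, i.e. on $\{R/2\le t+|x|^2\le R\}$. On this set we have the pointwise bounds
\[
|\partial_t\psi_R|+|\Delta\psi_R|\le CR^{-1}(\psi_R^*)^{1/p}.
\]
These follow from the power $2p'$, which gives $|\nabla(\eta^{2p'})|\lesssim\eta^{2p'-1}|\eta'|$ and $\eta^{2p'-2}\le(\eta^{2p'})^{1/p}$, together with $|x|\lesssim\sqrt R$. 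Hölder's inequality then yields
\[
\Bigl|\iint v(\partial_t+\Delta)\psi_R\Bigr|\le CR^{-1}\,|\{t+|x|^2\le R\}|^{1/p'}\,Y(R)^{1/p},
\qquad Y(R):=\iint|v|^p\psi_R^*.
\]
The support has measure of order $R^{1+n/2}=R^{p'}$. So the prefactor is $CR^{-1}R^{1}=C$, and this is exactly where criticality $p=p_F$ enters. Writing $I(R):=\iint|v|^p\psi_R$, we obtain
\[
I(R)+M/2\le C\,Y(R)^{1/p}.
\]

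\textbf{Step 3: the scale ODE.} Following Zhang and Ikeda--Sobajima, average over dyadic scales. Set
\[
\mathcal Y(R):=\int_{R_0}^{R}Y(\rho)\,\frac{d\rho}{\rho}.
\]
Using $\psi^*_\rho\le\chi_{\{\rho/2\le t+|x|^2\le \rho\}}$ and Fubini, one gets $\mathcal Y(R)\le C\iint_{t+|x|^2\le R}|v|^p\le C\,I(2R)$. Also $R\,\mathcal Y'(R)=Y(R)$. Combining this with Step 2 at scale $2R$ (with the cutoff adjusted so that $I(2R)$ controls the full integral), we reach
\[
\bigl(\mathcal Y(R)+M\bigr)^p\le C\,R\,\mathcal Y'(R)\qquad(R_0\le R<S/2).
\]
Integrating $d\mathcal Y/(\mathcal Y+M)^p\ge C^{-1}dR/R$ from $R_0$ gives
\[
C^{-1}\log(R/R_0)\le\frac{M^{1-p}}{p-1}.
\]
Hence $R\le R_0\exp(CM^{-(p-1)})$, and letting $R\uparrow S/2$ gives \eqref{eq:critical-sign-lemma-bound}; the factor $2$ is absorbed into $C$ after enlarging it.

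\textbf{Main obstacle.} I expect the delicate point to be Step 3: making the inequality $\mathcal Y\lesssim I$ and the comparison of cutoffs at neighbouring scales precise. This is where the monotonicity of $\eta$ and the choice of $\eta^*$ must be used carefully. The sign-changing aspect is harmless, because it enters only through Step 1. There the first-moment bound makes the data pairing uniformly positive for all $R\ge R_0$, which explains the specific form of $R_0$.
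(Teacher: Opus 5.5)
Your Steps 1 and 2 match the paper's proof: the same data-pairing bound $\int f\psi_R(0)\ge M/2$, the same estimate $|\partial_t\psi_R+\Delta\psi_R|\le CR^{-1}(\psi_R^*)^{1/p}$, and the same critical cancellation $R^{-1}\cdot R^{(1+n/2)/p'}=1$. Step 3 also uses the same Ikeda--Sobajima scale averaging. However, Step 3 as written does not yield the differential inequality you then integrate. From $\mathcal Y(R)\le(\log 2)\iint_{t+|x|^2\le R}|v|^p\le(\log 2)\,I(2R)$ and Step 2 at scale $2R$, you only obtain
\[
\Bigl(\frac{\mathcal Y(R)}{\log 2}+\frac M2\Bigr)^p\le C\,Y(2R)=2C\,R\,\mathcal Y'(2R).
\]
Here the derivative sits at scale $2R$, while the left side is evaluated at $R$. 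Since $\mathcal Y$ is nondecreasing, $\mathcal Y(R)\le\mathcal Y(2R)$, which is the wrong direction for turning this into $(\mathcal Y+M)^p\lesssim R\,\mathcal Y'$ at a single scale. What you have is a delay inequality, and the phrase ``with the cutoff adjusted so that $I(2R)$ controls the full integral'' does not remove the mismatch. The loss comes from replacing $\psi^*_\rho$ by the indicator of the annulus $\{\rho/2\le t+|x|^2\le\rho\}$; this discards exactly the information that lets the argument close at a single scale.

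The missing step is the same-scale comparison $\mathcal Y(R)\le(\log 2)\,I(R)$, which is where the monotonicity of $\eta$ enters. Fix $(t,x)$ and set $\tau=t+|x|^2$. For $\rho\le R$ you have $\tau/\rho\ge\tau/R$, so $\eta^*(\tau/\rho)\le\eta(\tau/\rho)\le\eta(\tau/R)$, that is, $\psi^*_\rho\le\psi_R$. Moreover, $\psi^*_\rho(t,x)\neq0$ only for $\tau<\rho\le2\tau$. Hence $\int_0^R\psi^*_\rho(t,x)\,d\rho/\rho\le(\log 2)\,\psi_R(t,x)$, and Fubini gives $\mathcal Y(R)\le(\log 2)\,I(R)$; the paper takes the lower limit $0$ in $\mathcal Y$. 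Combining this with Step 2 at the same scale $R$ gives $(M/2+\mathcal Y(R)/\log 2)^p\le(M/2+I(R))^p\le C\,Y(R)=C\,R\,\mathcal Y'(R)$ for a.e.\ $R\in(R_0,S)$. Your integration then works on all of $(R_0,S)$, so you can let $R\uparrow S$ directly, with no need to stop at $S/2$ and absorb a factor $2$. Alternatively, your delay inequality could be closed by a discrete iteration $a_{k+1}\ge a_k+c\,a_k^p$ over $4$-adic scales, combined with the uniform bound $I(R)\le C$ that Step 2 gives via $Y(R)\le I(R)$. That, however, is a different argument from the one you wrote.
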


\begin{proof}
If $S\le R_0$, the conclusion is immediate. We therefore assume $S>R_0$ and
fix $R$ with
\[
R_0<R<S.
\]
Since $\psi_R(0,x)=1$ whenever $|x|\le\sqrt{R/2}$, we have
\begin{align}
\int_{\R^n}f(x)\psi_R(0,x)\,dx
&=M-\int_{\R^n}f(x)(1-\psi_R(0,x))\,dx\notag\\
&\ge M-\int_{|x|\ge\sqrt{R/2}}|f(x)|\,dx\notag\\
&\ge M-\sqrt{\frac2R}\,B.
\label{eq:critical-initial-pairing}
\end{align}
By the definition of $R_0$,
\begin{equation}
\int_{\R^n}f(x)\psi_R(0,x)\,dx\ge\frac M2
\qquad (R\ge R_0).
\label{eq:critical-initial-positive}
\end{equation}

We next estimate the derivatives of $\psi_R$. On the support of the
derivatives of $\psi_R$,
\[
\frac12\le\frac{t+|x|^2}{R}\le1,
\qquad |x|^2\le R.
\]
A direct differentiation of \eqref{eq:critical-test-functions} gives
\begin{equation}
|\partial_t\psi_R+\Delta\psi_R|
\le
\frac CR(\psi_R^*)^{1/p}.
\label{eq:critical-derivative-bound}
\end{equation}
Indeed, the first derivative terms contain powers at least
$\eta^{2p'-1}$, while the second derivative terms contain
$\eta^{2p'-2}$, and
\[
2p'-2=\frac{2p'}p.
\]
The factors $|x|^2/R^2$ arising from the Hessian are bounded by $R^{-1}$ on
the derivative support.

Because $R<S$, the test function vanishes before the terminal time. The weak
formulation therefore gives
\begin{equation}
\iint |v|^p\psi_R\,dx\,dt
+
\int f(x)\psi_R(0,x)\,dx
=
-\iint v(\partial_t\psi_R+\Delta\psi_R)\,dx\,dt.
\label{eq:critical-weak-test}
\end{equation}
Here and below the space-time integrals are over $(0,S)\times\R^n$; the
integrands are supported in $0<t<R$, so no ambiguity arises. Define
\[
I(R):=\iint |v|^p\psi_R\,dx\,dt,
\qquad
y(R):=\iint |v|^p\psi_R^*\,dx\,dt.
\]
Using \eqref{eq:critical-initial-positive},
\eqref{eq:critical-derivative-bound}, H\"older's inequality, and
\[
|\supp\psi_R^*|\le CR^{1+n/2},
\]
we obtain
\begin{align}
I(R)+\frac M2
&\le
CR^{-1}y(R)^{1/p}R^{(1+n/2)/p'}\notag\\
&=Cy(R)^{1/p},
\label{eq:critical-basic-scale}
\end{align}
because $p'=1+n/2$. This is the critical cancellation of the scale factor.

Following the scale-integration device of Ikeda and Sobajima
\cite{IkedaSobajima2019}, define
\begin{equation}
Y(R):=\int_0^R y(r)\,\frac{dr}{r}.
\label{eq:critical-Y}
\end{equation}
Then $Y$ is absolutely continuous and
\[
Y'(R)=\frac{y(R)}R
\]
for almost every $R$. Moreover,
\begin{equation}
Y(R)\le(\log2)I(R).
\label{eq:critical-Y-I}
\end{equation}
To verify this, use Fubini's theorem. For a fixed point $(t,x)$, with
$\sigma=(t+|x|^2)/R$, the inner scale integral equals
\[
\int_\sigma^\infty [\eta^*(s)]^{2p'}\,\frac{ds}{s}.
\]
If $\sigma<1/2$, this is at most $\log2$ and $\eta(\sigma)=1$; if
$1/2\le\sigma\le1$, monotonicity of $\eta$ gives
\[
\int_\sigma^1\eta(s)^{2p'}\frac{ds}{s}
\le
(\log2)\eta(\sigma)^{2p'}.
\]
This proves \eqref{eq:critical-Y-I}.

Set
\[
\delta:=\frac M2,
\qquad
Z(R):=\delta+\frac{Y(R)}{\log2}.
\]
Combining \eqref{eq:critical-basic-scale} and \eqref{eq:critical-Y-I},
\[
Z(R)^p\le Cy(R)=CRY'(R)=C(\log2)RZ'(R)
\]
for almost every $R\in(R_0,S)$. Hence
\[
\frac{Z'(R)}{Z(R)^p}\ge\frac cR.
\]
Since
\[
-\frac{d}{dR}Z(R)^{1-p}
=(p-1)Z(R)^{-p}Z'(R),
\]
we obtain
\[
-\frac{d}{dR}Z(R)^{1-p}\ge\frac cR
\]
for almost every $R\in(R_0,S)$. Integrating from $R_0$ to $R<S$ gives
\[
Z(R)^{1-p}
\le
Z(R_0)^{1-p}-c\log\frac R{R_0}.
\]
The left-hand side is nonnegative. Since $Z(R_0)\ge\delta$ and $1-p<0$,
\[
\log\frac R{R_0}
\le
C\delta^{-(p-1)}
\le
CM^{-(p-1)}.
\]
Letting $R\uparrow S$ proves \eqref{eq:critical-sign-lemma-bound}.
\end{proof}

\begin{remark}
Lemma \ref{lem:critical-sign} is an adaptation of the critical scale-ODE
test-function method in \cite{IkedaSobajima2019}. The essential new point for
our purposes is \eqref{eq:critical-initial-pairing}, which replaces pointwise
nonnegativity of the initial datum by positive total mass plus a finite first
absolute moment.
\end{remark}

\paragraph{Positive mass generated at a fixed time}

We apply Lemma \ref{lem:critical-sign} to a time shift of the zero-mass
solution. The following estimates record precisely what is needed.

\begin{proposition}[Fixed-time generated mass and first moment]
\label{prop:fixed-mass-critical}
Fix $\tau>0$. Assume \eqref{eq:data-space},
$\int_{\R^n}u_0=0$, and $u_0\not\equiv0$. For sufficiently small $\eps$ the
solution exists on $[0,\tau]$, and there are constants $c_\tau,C_\tau>0$
such that
\begin{equation}
c_\tau\eps^p
\le
M_\eps(\tau):=\int_{\R^n}u(\tau,x)\,dx
\le
C_\tau\eps^p,
\label{eq:critical-fixed-mass-two-sided}
\end{equation}
and
\begin{equation}
B_\eps(\tau):=
\int_{\R^n}|x|\,|u(\tau,x)|\,dx
\le C_\tau\eps.
\label{eq:critical-fixed-moment}
\end{equation}
\end{proposition}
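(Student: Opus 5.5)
Fix $\tau>0$. The plan is to work on the fixed interval $[0,\tau]$ using the mild formulation \eqref{eq:mild} together with the mass identity of Proposition~\ref{prop:mass}. The proof has three steps.

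\emph{Step 1: local existence with an $O(\eps)$ bound.} Run the standard contraction argument in $L^1\cap L^\infty$ on $[0,\tau]$. Because $\|\eps e^{t\Delta}u_0\|_{L^1\cap L^\infty}\le\eps\|u_0\|_{L^1\cap L^\infty}$ and the nonlinearity is superlinear, the existence time of the fixed point grows without bound as $\eps\to0$. Hence, for $\eps$ small (depending on $\tau$), the solution exists on $[0,\tau]$ and satisfies
\[
\sup_{0\le t\le\tau}\bigl(\|u(t)\|_1+\|u(t)\|_\infty\bigr)\le C_\tau\eps .
\]
This can also be read off from a continuity argument: $\|u(t)\|_\infty\le\eps\|u_0\|_\infty+\tau\sup\|u\|_\infty^p$, and similarly in $L^1$, where one uses $\||u|^p\|_1\le\|u\|_\infty^{p-1}\|u\|_1$.

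\emph{Step 2: two-sided mass bound.} Since $\int u_0=0$, \eqref{eq:mass-identity} gives
\[
M_\eps(\tau)=\int_0^\tau\!\!\int|u|^p\,dx\,ds .
\]
For the upper bound, the integrand is at most $\|u\|_\infty^{p-1}\|u\|_1\le C\eps^p$, so $M_\eps(\tau)\le C_\tau\eps^p$.

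For the lower bound I would use Lemma~\ref{lem:domination}: $|u|^p\ge\eps^p(v_+)^p$. The proof of Lemma~\ref{lem:cylinder} works around any chosen time $s_*$. Take $s_*=\tau/2$; this gives a cylinder $[s_0,s_1]\times E\subset(0,\tau)\times\R^n$ on which $v\ge c_0$. Then
\[
M_\eps(\tau)\ge\eps^p c_0^p(s_1-s_0)|E|=c_\tau\eps^p .
\]

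\emph{Step 3: the first moment.} This is the step I expect to be the main obstacle. The moment of the linear part and the moment of the Duhamel part have to be controlled separately, and the Duhamel part needs a weighted estimate. Write $u=\eps v+w$ with $w\ge0$. Two standard facts are used:
\[
\int|x|\,|e^{t\Delta}g|\,dx\le\int|x|\,|g|\,dx+C\sqrt t\,\|g\|_1,
\]
which follows from $|x|\le|x-y|+|y|$ and $\int|z|G(t,z)\,dz=C\sqrt t$. For the linear part this gives $\int|x||v(\tau)|\le \||x|u_0\|_1+C\sqrt\tau\|u_0\|_1$, so the contribution is $O(\eps)$.

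For $w(\tau)$, let $\mu(t):=\int|x||u(t)|$. Then
\[
\int|x|\,w(\tau)\le\int_0^\tau\Bigl(\int|x||u(s)|^p+C\sqrt{\tau}\,\|u(s)\|_p^p\Bigr)ds
\le\int_0^\tau\bigl(C\eps^{p-1}\mu(s)+C\eps^p\bigr)ds ,
\]
using $\int|x||u|^p\le\|u\|_\infty^{p-1}\mu$. Applying the same computation at every $t\le\tau$ gives
\[
\mu(t)\le C\eps+C\eps^{p-1}\int_0^t\mu(s)\,ds ,
\]
and Gronwall yields $\mu(\tau)\le C_\tau\eps$, which is \eqref{eq:critical-fixed-moment}.

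Gronwall needs $\mu$ to be finite a priori. To justify this, I would either run the fixed point in the weighted space $L^1((1+|x|)dx)\cap L^\infty$, or apply the argument with truncated weights $\min(|x|,N)$ and let $N\to\infty$. Making this justification precise is the delicate point of the proof.
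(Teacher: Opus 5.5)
Your proposal is correct and follows essentially the same route as the paper: the $O(\eps)$ local bound in $L^1\cap L^\infty$ on $[0,\tau]$, the mass identity with interpolation for the upper bound and $|u|^p\ge\eps^p(v_+)^p$ for the lower bound, and the weighted estimate $\||x|e^{t\Delta}g\|_1\le\||x|g\|_1+C\sqrt t\,\|g\|_1$ followed by Gronwall for the first moment. The only differences are cosmetic: you obtain the positive constant from the cylinder of Lemma~\ref{lem:cylinder} instead of directly from $\int_0^\tau\|(e^{s\Delta}u_0)_+\|_p^p\,ds>0$, and you explicitly address the a priori finiteness of the weighted moment needed for Gronwall (your truncation $\min(|x|,N)$ works), which the paper leaves implicit.
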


\begin{proof}
On the fixed interval $[0,\tau]$, standard local theory and the mild formula
give
\begin{equation}
\sup_{0\le t\le\tau}
\bigl(\|u(t)\|_1+\|u(t)\|_\infty\bigr)
\le C_\tau\eps.
\label{eq:critical-local-small}
\end{equation}
Integrating the equation in space and using $\int u_0=0$ yields the exact
mass identity
\begin{equation}
M_\eps(\tau)
=
\int_0^\tau\|u(s)\|_p^p\,ds.
\label{eq:critical-mass-identity}
\end{equation}
The upper bound in \eqref{eq:critical-fixed-mass-two-sided} follows at once
from \eqref{eq:critical-local-small} and interpolation.

For the lower bound, put $v(s)=e^{s\Delta}u_0$. Since the Duhamel term is
nonnegative,
\[
u(s,x)\ge\eps v(s,x).
\]
On the set where $v(s,x)>0$, this implies
\[
|u(s,x)|^p\ge\eps^p(v(s,x)_+)^p.
\]
Hence
\begin{equation}
M_\eps(\tau)
\ge
\eps^p\int_0^\tau\|(e^{s\Delta}u_0)_+\|_p^p\,ds
=:c_\tau\eps^p.
\label{eq:critical-mass-lower-direct}
\end{equation}
The constant $c_\tau$ is strictly positive. Indeed, the heat semigroup is
injective, so $e^{s\Delta}u_0\not\equiv0$ for $s>0$; its integral is zero,
and therefore its positive part cannot vanish identically.

It remains to prove \eqref{eq:critical-fixed-moment}. We use the elementary
weighted heat estimate
\begin{equation}
\||x|e^{t\Delta}g\|_1
\le
\||x|g\|_1+C\sqrt t\,\|g\|_1,
\label{eq:weighted-heat-critical}
\end{equation}
which follows from $|x|\le|y|+|x-y|$ inside the heat-kernel convolution. Let
\[
W(t):=\int_{\R^n}(1+|x|)|u(t,x)|\,dx.
\]
Applying \eqref{eq:weighted-heat-critical} to the mild formula, using
$t\le\tau$, and estimating
\[
\|(1+|x|)|u|^p\|_1
\le
\|u\|_\infty^{p-1}W(t),
\]
we obtain
\[
W(t)
\le
C_\tau\eps
+C_\tau\int_0^t\|u(s)\|_\infty^{p-1}W(s)\,ds.
\]
By \eqref{eq:critical-local-small} and Gronwall's inequality,
\[
\sup_{0\le t\le\tau}W(t)\le C_\tau\eps,
\]
which proves \eqref{eq:critical-fixed-moment}.
\end{proof}

\paragraph{Completion of the critical upper estimate}

We now complete the endpoint proof. If $T_\eps\le1$, the upper bound in
\eqref{eq:sharp-critical} is trivial for small $\eps$. Assume therefore
$T_\eps>1$ and set
\[
f_\eps(x):=u(1,x).
\]
By Proposition \ref{prop:fixed-mass-critical},
\[
M_\eps:=\int f_\eps\ge c\eps^p,
\qquad
B_\eps:=\int |x||f_\eps(x)|\,dx\le C\eps.
\]
Apply Lemma \ref{lem:critical-sign} to the shifted solution
\[
V(t,x)=u(t+1,x).
\]
Its lifespan is $S=T_\eps-1$, and the starting scale in
\eqref{eq:critical-sign-lemma-bound} satisfies
\begin{equation}
R_{0,\eps}
\le
C\left(\frac{\eps}{\eps^p}\right)^2
=
C\eps^{-2(p-1)}.
\label{eq:critical-R0-eps}
\end{equation}
Moreover,
\[
M_\eps^{-(p-1)}
\le
C\eps^{-p(p-1)}.
\]
Therefore
\begin{align}
T_\eps-1
&\le
C\eps^{-2(p-1)}
\exp\!\left(C\eps^{-p(p-1)}\right)\notag\\
&\le
\exp\!\left(C'\eps^{-p(p-1)}\right)
\label{eq:critical-upper-final}
\end{align}
for sufficiently small $\eps$. This proves the upper bound in
\eqref{eq:sharp-critical}. Together with Section \ref{sec:critical-lower}, the
critical part of Theorem \ref{thm:main} is complete.

\section{Conclusion and further remarks}

We have determined the small-data lifespan for
$$
u_t-\Delta u=|u|^p
$$
throughout the Fujita range
$$
1<p\le p_F,
\qquad
p_F=1+\frac2n,
$$
for zero-mass sign-changing initial data with finite first absolute moment, and with nonzero first moment in the regimes where the dipole upper bound is used. The zero-mass condition removes the leading Gaussian term from the linear evolution and replaces it by a dipole profile. At the same time, the positivity of the source
$
|u|^p\ge0
$
immediately creates a positive zeroth-order mass. The interaction between these two mechanisms produces the additional threshold
$$
p_m=1+\frac1{n+1},
$$
which lies strictly below the Fujita exponent.

The sign of the nonlinearity is essential in this mechanism. For the odd equation
$$
u_t-\Delta u=|u|^{p-1}u,
$$
positive and negative parts of the source may cancel, and zero-mass configurations can be preserved by suitable symmetries; this may lead to global sign-changing solutions in parameter ranges where nonnegative solutions blow up, see \cite{Ghoul2012}. In contrast, for the present equation the source is nonnegative, and therefore
$$
\frac{d}{dt}\int_{\mathbb R^n}u(t,x)\,dx
=
\int_{\mathbb R^n}|u(t,x)|^p\,dx\ge0.
$$
Thus the initial cancellation of the total mass is destroyed immediately unless the solution is identically zero. This explains why the long-time dynamics eventually returns to a zeroth-order Fujita mechanism, even though the initial linear asymptotics are governed by a dipole.

Combining the backward-Gaussian upper estimates with the cancellation-aware
\(L^1\)--\(L^\infty\) bootstrap, we obtain the sharp subcritical lifespan laws
$$
T_\eps\asymp
\begin{cases}
\eps^{-\frac{2(p-1)}{2-(n+1)(p-1)}},
&1<p<p_m,\\[2mm]
\eps^{-2/(n+1)}
\bigl(\log(1/\eps)\bigr)^{-2/(n+2)},
&p=p_m,\\[2mm]
\eps^{-\frac{2p(p-1)}{2-n(p-1)}},
&p_m<p<p_F.
\end{cases}
$$
Hence, below \(p_m\), the lifespan is determined by the initial dipole mode. At \(p=p_m\), the nonlinear mass generated by the dipole accumulates only logarithmically, which produces the logarithmic correction. Above \(p_m\), the generated positive mass dominates and the lifespan is governed by the usual Fujita balance with an effective amplitude of order \(\eps^p\).

At the Fujita endpoint
$
p=p_F,
$
the same effective-mass mechanism leads to an exponential lifespan. After any fixed positive time, the solution has generated a positive mass of order
$
\eps^{p_F}.
$
The cancellation-aware bootstrap gives the lower estimate
$$
T_\eps\ge
\exp\!\left(
c\eps^{-p_F(p_F-1)}
\right),
$$
while the sign-changing critical scale-ODE test-function argument yields
$$
T_\eps\le
\exp\!\left(
C\eps^{-p_F(p_F-1)}
\right).
$$
Thus the zero-mass cancellation modifies the effective small parameter at the critical exponent, replacing the classical Lee-Ni scale \(\eps^{-(p_F-1)}\) by the stronger scale \(\eps^{-p_F(p_F-1)}\).

The endpoint argument also clarifies the role of spatial decay assumptions. In Pinsky's qualitative critical non-globality theorem, an additional Gaussian control of the negative part of the initial datum is imposed. In the present quantitative approach, the finite absolute first moment
$$
\int_{\mathbb R^n}|x|\,|u_0(x)|\,dx<\infty
$$
provides instead a direct estimate of the large-scale cutoff pairing after a fixed positive time. This is sufficient for the critical scale-ODE argument and leads to the sharp exponential upper bound.

Finally, if higher moments of the initial datum also vanish, then the leading linear profile is no longer a dipole but a higher-order derivative of the Gaussian. More precisely, if
$$
\int_{\mathbb R^n}x^\alpha u_0(x)\,dx=0
\qquad (|\alpha|<k),
$$
while at least one moment of order \(k\) is nonzero, then the linear evolution is governed asymptotically by a linear combination of terms
$$
\partial^\alpha G(t,x),
\qquad |\alpha|=k.
$$

For the present positive-source equation, however, a positive zeroth-order mass is still generated immediately by the nonlinearity. Therefore the higher-order cancellation case cannot be obtained by a simple formal replacement of \(n+1\) by \(n+k\) in the lifespan formulas above. The precise competition between the initial higher-order multipole and the nonlinearly generated Gaussian mass remains an interesting problem for further study.

\section*{Declaration of competing interest}
The Author declare that there is no conflict of interest.

\section*{Data Availability Statements}
The manuscript has no associated data.

\section*{Declaration of generative AI and AI-assisted technologies in the manuscript preparation process.}
During the preparation of this work, the author used ChatGPT (OpenAI) to assist with language editing, improvement of readability, and organization of selected parts of the manuscript. All mathematical arguments, proofs, references, interpretations, and conclusions were independently checked and verified by the author, who takes full responsibility for the content of the manuscript.

\section*{Acknowledgment} The author is supported by the Science Committee of the Ministry of Education and Science of the Republic of Kazakhstan (Grant No. BR31714735).

\end{document}